\newtheorem*{rep@theorem}{\rep@title}
\newcommand{\newreptheorem}[2]{
\newenvironment{rep#1}[1]{
 \def\rep@title{#2 \ref{##1}}
 \begin{rep@theorem}}
 {\end{rep@theorem}}}
\newtheorem{theorem}{Theorem}
\newtheorem{lemma}[theorem]{Lemma}
\newtheorem{corollary}[theorem]{Corollary}
\theoremstyle{definition}
\newtheorem{definition}[theorem]{Definition}
\newcommand{\Lalpha}{\Lscr_1^{> \alpha}}
\newcommand{\Cscr}{\mathcal{C}}
\newcommand{\Bscr}{\mathcal{B}}
\newcommand{\inside}{\subseteq}
\newcommand{\Nscr}{\mathcal{N}}
\newcommand{\Lscr}{\mathcal{L}}
\newcommand{\Fscr}{\mathcal{F}}
\newcommand{\Iscr}{\mathcal{I}}
\newcommand{\Pscr}{\mathcal{P}}
\definecolor{darkgreen}{rgb}{0.0, 0.8, 0.0}
\begin{document}

\title{An improved integrality gap for disjoint cycles in planar graphs}
\author{Niklas Schlomberg\thanks{Research Institute for Discrete Mathematics and Hausdorff Center for Mathematics, University of Bonn}}
\date{\vspace*{-8mm}}
\maketitle

\begin{abstract}
We present a new greedy rounding algorithm for the Cycle Packing Problem for uncrossable cycle families in planar graphs.
This improves the best-known upper bound for the integrality gap of the natural packing LP to a constant slightly less than $3.5$.
Furthermore, the analysis works for both edge- and vertex-disjoint packing.
The previously best-known constants were $4$ for edge-disjoint and $5$ for vertex-disjoint cycle packing.

This result also immediately yields an improved Erd\H{o}s--P\'osa ratio:
for any uncrossable cycle family in a planar graph, the minimum number of vertices (edges) needed to hit all cycles in the family is less than 8.38 times the maximum number of vertex-disjoint (edge-disjoint, respectively) cycles in the family.

Some uncrossable cycle families of interest to which the result can be applied are the family of all cycles in a directed or undirected graph, in undirected graphs also the family of all odd cycles and the family of all cycles containing exactly one edge from a specified set of demand edges.
The last example is an equivalent formulation of the fully planar Disjoint Paths Problem.
Here the Erd\H{o}s--P\'osa ratio translates to a ratio between integral multi-commodity flows and minimum cuts.
\end{abstract}

\section{Introduction}

Given a family $\Cscr$ of cycles in a (directed or undirected) graph $G$, the Cycle Packing Problem asks for a maximum-cardinality subset $\Cscr^* \subseteq \Cscr$ of pairwise vertex- or edge-disjoint cycles.
It admits the natural packing LP
\begin{equation}\label{eq:lp}
\max \left\{ \sum_{C\in\Cscr}x_C : \sum_{C\in\Cscr: v\in C}x_C\le 1 \ (v \in V),\ x_C\ge 0 \ (C\in\Cscr) \right\}
\end{equation}
for vertex-disjoint cycle packing and
\begin{equation}\label{eq:lp_edgedisjoint}
\max \left\{ \sum_{C\in\Cscr}x_C : \sum_{C\in\Cscr: e\in C}x_C\le 1 \ (e\in E),\ x_C\ge 0 \ (C\in\Cscr) \right\}
\end{equation}
for edge-disjoint cycle packing.
Despite its exponentially many variables, optimum LP solutions can be computed in polynomial time if $\Cscr$ is given by a \emph{weight oracle}~\cite{SchTV22}:

\begin{definition}[\cite{SchTV22}]
 Let $\Cscr$ be a family of cycles in a graph $G$.
 $\Cscr$ has a \emph{weight oracle} if for any edge weights $w \colon E(G) \to \mathbb{R}_{\geq 0}$ we can compute a weight-minimal cycle in $\Cscr$ in polynomial time.
\end{definition}

For arbitrary graphs the integrality gap of the LPs~\eqref{eq:lp} and~\eqref{eq:lp_edgedisjoint} is unbounded even if $\Cscr$ is the set of all odd cycles in $G$~\cite{Ree99,RauR01}.
For planar graphs however, Schlomberg, Thiele and Vygen~\cite{SchTV22} have recently shown constant upper bounds for the integrality gaps if the cycle family $\Cscr$ is \emph{uncrossable}.

\begin{definition}[Goemans, Williamson~\cite{GoeW98}] \label{def:uncrossable}
A family $\Cscr$ of cycles in a graph is called \emph{uncrossable} if the following property holds.

Let $C_1,C_2\in\Cscr$ and let $P_2$ be a path in $C_2$ such that $P_2$ shares only its endpoints with $C_1$. 
Then there is a path $P_1$ in $C_1$ between these endpoints such that
$P_1+P_2\in\Cscr$ and $(C_1 - P_1)+(C_2 -P_2)$ contains a cycle in $\Cscr$ (as an edge set).
\end{definition}

In their work they give an upper bound of $5$ for the vertex-disjoint cycle packing LP~\eqref{eq:lp}, using a greedy rounding algorithm.
For the edge-disjoint LP~\eqref{eq:lp_edgedisjoint} they show an upper bound of $4$, generalizing a similar result for the edge-disjoint paths problem by Garg, Kumar and Seb\H{o} \cite{GarKS22}.
In this work we modify their greedy rounding algorithm and analyze it using a new structural lemma. This improves the integrality gaps of both LPs to below $3.5$:

\begin{theorem}
  Let $G$ be a planar graph, embedded in the sphere, and $\Cscr$ an uncrossable family of cycles in $G$.
  Then there exists an integral solution to the vertex- or edge-disjoint cycle packing LP~\eqref{eq:lp} or~\eqref{eq:lp_edgedisjoint} with at least $\frac{9}{20 + \sqrt{130}} > \frac{1}{3.5}$ the LP value.
  If $\Cscr$ is given by a weight oracle we can compute such a solution in polynomial time.
 \end{theorem}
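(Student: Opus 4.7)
The plan is to follow the greedy rounding framework of Schlomberg, Thiele and Vygen~\cite{SchTV22}, but with a parameterized selection rule and a sharper structural lemma exploiting the spherical embedding. First I would compute an optimal fractional solution $x^*$ to the relevant LP in polynomial time via the weight oracle (as in~\cite{SchTV22}) and let $\Cscr^* \inside \Cscr$ be its support. The algorithm then repeatedly picks a cycle $C \in \Cscr^*$ according to a minimality rule depending on a threshold parameter $\alpha > 0$, adds $C$ to the integral output, and deletes from $\Cscr^*$ every cycle touching $C$ in a vertex (respectively edge). To obtain the claimed integrality gap it suffices to show that the total $x^*$-weight removed per iteration is at most $\tfrac{20 + \sqrt{130}}{9}$, for then the number of iterations is at least $\tfrac{9}{20+\sqrt{130}}$ times the LP value.

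Towards this, I would fix a picked cycle $C$ and classify the cycles $C' \in \Cscr^*$ that hit $C$. Using the spherical embedding, each such $C'$ either lies entirely on one side of $C$ (nested) or alternates between the two sides (crossing); in the crossing case the maximal subpaths of $C'$ inside and outside $C$ are arcs sharing only endpoints with $C$, so Definition~\ref{def:uncrossable} lets us uncross $C'$ and $C$ along such an arc to obtain two further cycles in $\Cscr$. Combining this uncrossing operation with the minimality rule used to select $C$ should yield a decomposition in which each vertex or edge of $C$ carries a bounded amount of $x^*$-mass, split between transversal crossings on one hand and nested cycles on the other.

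The new structural lemma then quantifies this bound as a function of $\alpha$ with two competing contributions: one from crossing cycles, which grows with $\alpha$, and one from nested cycles, which shrinks with $\alpha$ because the minimality rule forces any not-yet-removed nested cycle to have small $x^*$-mass. Balancing the two via an appropriate choice of $\alpha$ leads to a quadratic optimization whose optimum produces the value $\tfrac{20 + \sqrt{130}}{9}$. Since the selection rule reduces to computing weight-minimal cycles under suitably modified edge weights, the whole procedure runs in polynomial time given a weight oracle.

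The main obstacle will be proving the structural lemma tightly. In particular, cycles in $\Cscr^*$ that share long subpaths with $C$, rather than meeting it only in isolated arcs, are prone to being overcounted by a naive arc decomposition; handling them requires invoking the uncrossability axiom iteratively and tracking how $x^*$-mass redistributes after each uncrossing without violating the LP constraints on $V(C)$ or $E(C)$. The same argument should moreover be symmetric in the two sides of $C$, so that the vertex- and edge-disjoint settings can be covered by a single lemma; this symmetry is what allows the prior bounds of $5$ and $4$ to be improved simultaneously to below $3.5$.
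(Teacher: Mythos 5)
Your outline captures the greedy-rounding framework and the use of the weight oracle, but it diverges from the paper's argument in a way that makes the claimed bound unreachable. Most significantly, you always add a \emph{single} cycle $C$ per iteration. The paper's Lemma~\ref{lemma:x_is_small_on_average} shows that with a single one-sided cycle the removed LP mass is at most $3 + x(C) \leq 4$, and this bound of $4$ is what the single-cycle strategy gives; the decisive idea that pushes it below $3.5$ is to sometimes add an entire set $\Fscr^*$ of pairwise disjoint one-sided cycles in one iteration. The two extra candidate sets are $\Lscr_1^{>\alpha}$ for $\alpha\geq\tfrac12$ (automatically pairwise disjoint by LP feasibility, analyzed via Lemma~\ref{lemma:neighbourhood_of_large_set}) and, for $\tfrac14\leq\alpha<\tfrac12$, a large independent set in the conflict graph of $\Lscr_1^{>\alpha}$ obtained from the Four Colour Theorem. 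The final constant $\frac{20+\sqrt{130}}{9}$ then falls out of combining the three candidate bounds via an integral over the threshold parameter, not from a per-iteration arc balance.

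A second gap is your treatment of crossings. The paper does \emph{all} uncrossing up front: Lemma~\ref{lemma:laminar_lp_solution} produces an optimum LP solution whose support is laminar, so in the analysis no cycle in the support ever crosses another. Your plan to classify neighbors of $C$ as nested versus crossing and to uncross on the fly runs into exactly the bookkeeping problem you anticipate (long shared subpaths, redistributed mass possibly violating constraints on $V(C)$), which the paper sidesteps entirely by working in the laminar regime. Once laminarity is in hand, the heart of the argument is a purely combinatorial Structure Lemma~\ref{lemma:structure_lemma}: for any laminar family with no redundant cycles there is a multiset $M^*\subseteq V(G)$, $|M^*|\leq 3|\Lscr_1|$, such that each cycle $C$ is hit by $M^*$ at least $|\Nscr^1_\Lscr(C)\setminus\{C\}|$ times. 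This is proved by an induction that splits on a two-sided cycle, constructs incidence sets for each side, and carefully merges them. Your proposed arc decomposition is not a substitute, and the ``quadratic optimization in $\alpha$'' you mention does not appear: in the single-cycle case there is no tunable parameter at all, and the tunable threshold $\delta$ in the paper arises only because there are three different candidate choices of $\Fscr^*$ to balance against each other.

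Finally, the edge-disjoint case is not handled by arguing symmetry of a single lemma over the two sides of $C$. Instead, the paper uses a black-box reduction (Lemma~\ref{lemma:reduction_edge_to_vertex}) from edge-disjoint to vertex-disjoint packing for laminar families, mapping cycles so that shared edges become shared vertices, and then applies the vertex-disjoint theorem. You would need to either prove such a reduction or prove edge versions of each lemma separately; the symmetry you gesture at is not established.
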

 
 \subsection{The Erd\H{o}s--P\'osa ratio}
 
 The duals of the LPs~\eqref{eq:lp} and~\eqref{eq:lp_edgedisjoint} are relaxations of the Cycle Transversal Problem:
 This asks for a minimum subset of vertices or edges, respectively, that hit each cycle in $\Cscr$.
 Berman and Yaroslavtsev~\cite{BerY12} have shown an upper bound of $2.4$ for the integrality gaps of the edge and vertex cycle transversal LPs, improving on a previous bound of $3$ by Goemans and Williamson~\cite{GoeW98}.
 Multiplying the integrality gaps of the primal and dual LPs directly yields a maximum ratio between integral solutions to the primal and the dual:
 
 \begin{corollary}
  Let $G$ be a planar graph and $\Cscr$ an uncrossable family of cycles in $G$.
  Let $\nu_v$ respectively $\nu_e$ be the maximum number of vertex- and edge-disjoint cycles in $\Cscr$
  and let $\tau_v$ respectively $\tau_e$ be the minimum size of vertex and edge transversals for $\Cscr$. \newline
  Then $\frac{\tau_v}{\nu_v} \leq 2.4 \cdot \frac{20 + \sqrt{130}}{9} \leq 8.38$ and $\frac{\tau_e}{\nu_e} \leq 2.4 \cdot \frac{20 + \sqrt{130}}{9} \leq 8.38$
 \end{corollary}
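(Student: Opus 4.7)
The plan is to chain two integrality-gap results through LP duality, which is exactly what the statement of the corollary suggests by the phrase ``multiplying the integrality gaps of the primal and dual LPs directly yields a maximum ratio.'' I will treat the vertex-disjoint case; the edge-disjoint case is identical up to swapping \eqref{eq:lp} for \eqref{eq:lp_edgedisjoint} and replacing vertices by edges throughout.

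Let $\mathrm{OPT}_{\mathrm{LP}}$ denote the optimum of the packing LP \eqref{eq:lp} and let $\mathrm{OPT}_{\mathrm{LP}}^*$ denote the optimum of its LP dual, which is the natural fractional relaxation of the Cycle Transversal Problem studied by Goemans--Williamson and by Berman--Yaroslavtsev. By strong LP duality, $\mathrm{OPT}_{\mathrm{LP}} = \mathrm{OPT}_{\mathrm{LP}}^*$. From the Theorem applied to the packing LP I get
\[
\mathrm{OPT}_{\mathrm{LP}} \;\le\; \tfrac{20+\sqrt{130}}{9}\, \nu_v,
\]
since the integral solution it produces has value at most $\nu_v$. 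From the result of Berman and Yaroslavtsev~\cite{BerY12} cited above I get
\[
\tau_v \;\le\; 2.4\cdot \mathrm{OPT}_{\mathrm{LP}}^*.
\]
Chaining these inequalities through $\mathrm{OPT}_{\mathrm{LP}} = \mathrm{OPT}_{\mathrm{LP}}^*$ yields
\[
\tau_v \;\le\; 2.4\cdot \mathrm{OPT}_{\mathrm{LP}} \;\le\; 2.4\cdot \tfrac{20+\sqrt{130}}{9}\,\nu_v,
\]
and a final numerical check shows $2.4\cdot(20+\sqrt{130})/9 \le 8.38$.

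There is essentially no obstacle: the argument is a two-line chain of inequalities relying on LP duality, the main Theorem of this paper for the primal gap, and the Berman--Yaroslavtsev bound for the dual gap. The only point to be careful about is verifying that the Berman--Yaroslavtsev bound of $2.4$ applies in the uncrossable setting on planar graphs for both the vertex and edge transversal LPs (which it does, since their analysis covers exactly these cases), and that the uncrossability hypothesis on $\Cscr$ is what makes the Theorem applicable. The edge-disjoint version is then obtained verbatim with $\nu_e$ and $\tau_e$.
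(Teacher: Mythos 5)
Your argument is correct and is exactly the reasoning the paper intends when it writes ``Multiplying the integrality gaps of the primal and dual LPs directly yields a maximum ratio between integral solutions to the primal and the dual'': chain $\tau_v \le 2.4\cdot\mathrm{OPT}_{\mathrm{LP}}^* = 2.4\cdot\mathrm{OPT}_{\mathrm{LP}} \le 2.4\cdot\frac{20+\sqrt{130}}{9}\,\nu_v$ via strong LP duality, using Berman--Yaroslavtsev for the dual gap and the paper's main theorem for the primal gap, and likewise for the edge version. Nothing to add.
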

 
 The supremum of $\frac{\tau_v}{\nu_v}$ respectively $\frac{\tau_e}{\nu_e}$ is known as the Erd\H{o}s--P\'osa ratio for the Cycle Packing and Transversal Problem.
 The previously best-known upper bound for general uncrossable cycle families was $12$ for vertex-disjoint Cycle Packing and $9.6$ for edge-disjoint Cycle Packing~\cite{SchTV22};
 both resulting from multiplying the upper bounds for the primal and dual integrality gaps.
 
 \subsection{New Techniques}
 
 Our main algorithm that we use to find integral solutions to the Cycle Packing LP with the claimed approximation guarantee is similar to the greedy rounding algorithm used in~\cite{SchTV22}:
 Similar to~\cite{SchTV22}, we start by solving the LP and applying an uncrossing procedure to obtain an optimum LP solution where the cycles in the support form a \emph{laminar} family $\Lscr$.
 Afterwards, we iteratively pick a set $\Fscr^* \subseteq \Lscr$ of pairwise disjoint cycles, add them to our solution and remove all ``neighbours'', i.e.\@ cycles that are not disjoint to $\Fscr^*$, from the support.
 
 Schlomberg, Thiele and Vygen \cite{SchTV22} showed that they can always find a single cycle $C^*$ with LP value at most $5$ on its neighbourhood.
 In this work we prove a new Structure Lemma, showing that after a slight modification of our LP solution, for a \emph{one-sided} cycle $C$ (i.e.\@ a cycle with a side that contains no other cycles in $\Lscr$) the average LP value on the neighbours of $C$ without $C$ itself is at most $3$.
 
 In Section~\ref{sec:bounding_the_gap} we observe that this already improves the bound of $5$ from \cite{SchTV22} to $4$.
 However, by exploiting that we can also add several cycles in a single iteration to our solution, we can improve the bound further to below $3.5$ (see Section~\ref{sec:improving_the_bounds}).
 To this end, we add further candidates for our set $\Fscr^*$:
 First, all one-sided cycles with LP value at least $\alpha > \frac{1}{2}$ are pairwise disjoint.
 Second, among the one-sided cycles with LP value at least $\alpha > \frac{1}{4}$ we can find a large set of pairwise disjoint cycles due to the Four Colour Theorem. 
 
 Key of the new results is the Structure Lemma~\ref{lemma:structure_lemma}, which we prove by constructing a set $M^*$ of LP constraints (i.e.\@ vertices or edges) that cover each cycle in $\Lscr$ enough often.
 Section~\ref{sec:proof_of_main_lemma} is dedicated to the construction of $M^*$.
 We proceed by an induction on the number of cycles in $\Lscr$.
 If all cycles are one-sided, an auxiliary graph, similar to the planar dual, directly yields a feasible set $M^*$.
 Otherwise, we pick a minimal two-sided cycle, find feasible sets $M^*$ for both sides of it and carefully put them together to a solution for the whole family.
 
 \subsection{Examples for uncrossable cycle families}
 
 There are several examples for cycle families in $G$ that are always uncrossable and many of them have been studied individually.
 A list of the most interesting examples together with proofs of their uncrossability can be found in~\cite{SchTV22}.
 
 The first example of interest is the set of all cycles in an undirected graph $G$.
 For this problem Erd\H{o}s and P\'osa~\cite{ErdP65} showed that even in general, not necessarily planar, graphs with bounded cycle packing number the transversal number is bounded, although in general the ratio is unbounded.
 This property is known as the Erd\H{o}s--P\'osa property.
 In planar graphs the Erd\H{o}s--P\'osa ratio is $4$ for edge-disjoint packing (the upper bound comes from a result by Ma, Yu and Zang~\cite{MaYZ13}, tightness was shown by an example by Kr\'al\textquoteright{}~\cite{MaYZ13}).
 For vertex-disjoint packing~\cite{CheFS12} and~\cite{MaYZ13} gave an upper bound of $3$ on the Erd\H{o}s--P\'osa ratio.
 
 Also the set of all directed cycles in a digraph $G$ is uncrossable.
 Here again the Erd\H{o}s--P\'osa property holds on arbitrary graphs \cite{ReeRST96}.
 For planar $G$ the famous Lucchesi-Younger Theorem~\cite{LucY78} shows that the edge-disjoint version has Erd\H{o}s--P\'osa ratio $1$.
 For the vertex-disjoint version~\eqref{eq:lp} in planar graphs, Reed and Shepherd~\cite{ReeS96} gave the first constant upper bound on the Erd\H{o}s--P\'osa ratio.
 After three improvements by Fox and Pach as well as Cames van Batenburg, Esperet and M\"uller~\cite{CamEM17} and then Schlomberg, Thiele and Vygen~\cite{SchTV22}, this work decreases it below $8.38$.
 
 The next example of an uncrossable family is the set of all odd cycles in an undirected graph $G$.
 In this variant (in planar graphs) the edge-disjoint problem has an Erd\H{o}s--P\'osa ratio of exactly $2$~\cite{KraV04}.
 For the vertex-disjoint problem Fiorini et al.~\cite{FioHRV07} showed that the Erd\H{o}s--P\'osa ratio is at most $10$, which was improved to $6$ by Kr\'al\textquoteright{}, Sereni and Stacho~\cite{KraSS12}.
 
 Finally, one of the most interesting and well-studied variants of the Cycle Packing Problem is given as follows:
 Given a graph $G$ and a set $D$ of demand edges, then a $D$-cycle is a cycle in $G$ that contains exactly one demand edge.
 Since removing the demand edge from a $D$-cycle results in a path between the endpoints of the demand edge, the $D$-Cycle Packing Problem is equivalent to the Disjoint Paths Problem, and $D$-Cycle Packing in planar graphs corresponds to the Disjoint Paths Problem in fully planar instances.
 
 For the Fully Planar Edge-Disjoint Paths Problem the first constant-factor approximations and bounds on the integrality gap were given by Huang et al.~\cite{HuaMMSV21} and Garg, Kumar and Seb\H{o}~\cite{GarKS22}; the best upper bound on the integrality gap is $4$~\cite{GarKS22}.
 Garg and Kumar~\cite{GarK20} showed that also the Erd\H{o}s--P\'osa ratio for this problem is at most $4$.
 Due to a result by Middendorf and Pfeiffer~\cite{MidP93} the Fully Planar Vertex-Disjoint Paths Problem contains the edge-disjoint version as a special case.
 The first constant upper bound on the integrality gap of $5$ was found only recently by Schlomberg, Thiele and Vygen~\cite{SchTV22}.
 The best upper bound for the Erd\H{o}s--P\'osa ratio of $12$ comes in this variant from multiplying the upper bounds for the integrality gaps of the primal and the dual.
 
 This work now decreases both best-known upper bounds on the integrality gaps to the same value below $3.5$.
 For the Fully Planar Vertex-Disjoint Paths Problem we also improve the Erd\H{o}s--P\'osa ratio to below $8.38$:
 
 \begin{corollary}
  Given an instance $(G, D)$ of the Fully Planar Vertex-Disjoint Paths Problem, we can compute in polynomial time a set $\Pscr$ of vertex-disjoint $D$-cycles and $T\subseteq V(G)$ such that every $D$-cycle contains a vertex of $T$ with $|T| \leq 2.4 \cdot \frac{20 + \sqrt{130}}{9} |\Pscr| \leq 8.38 |\Pscr|$.
 \end{corollary}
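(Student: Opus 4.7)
The plan is to invoke the preceding theorem together with the known integrality gap bound for the dual, applied to the specific uncrossable family of $D$-cycles.

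First I would recall that, as noted in the introduction, given an instance $(G,D)$ of the Fully Planar Vertex-Disjoint Paths Problem, the family $\Cscr$ of $D$-cycles (cycles using exactly one demand edge) is an uncrossable family of cycles in the planar graph $G$. A weight oracle for $\Cscr$ is immediate in polynomial time: for each demand edge $d=\{s,t\}\in D$, compute a shortest $s$--$t$ path in $G-d$ with the given edge weights and close it with $d$; returning the cheapest such cycle over all $d\in D$ gives a minimum-weight $D$-cycle.

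Next, I would apply the main Theorem to $(G,\Cscr)$ in the vertex-disjoint setting. This produces in polynomial time an integral solution $\Pscr$ to the vertex-disjoint cycle packing LP~\eqref{eq:lp}, that is, a set of pairwise vertex-disjoint $D$-cycles, with
\[
|\Pscr| \;\geq\; \tfrac{9}{20+\sqrt{130}} \cdot \mathrm{LP}^*,
\]
where $\mathrm{LP}^*$ denotes the optimum of~\eqref{eq:lp} for $\Cscr$. In parallel, I would apply the algorithm of Berman and Yaroslavtsev~\cite{BerY12} to the dual vertex-transversal LP for $\Cscr$; this LP is also tractable via the same weight oracle (solving the dual of~\eqref{eq:lp} by the ellipsoid method, which needs only a separation oracle for the primal, i.e.\ the weight oracle). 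Their rounding yields in polynomial time a set $T\subseteq V(G)$ hitting every $D$-cycle with
\[
|T| \;\leq\; 2.4 \cdot \mathrm{LP}^*_{\mathrm{dual}} \;=\; 2.4 \cdot \mathrm{LP}^*,
\]
where the equality uses LP duality.

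Finally, I would combine the two bounds: substituting the lower bound on $|\Pscr|$ into the upper bound on $|T|$ yields
\[
|T| \;\leq\; 2.4 \cdot \mathrm{LP}^* \;\leq\; 2.4 \cdot \tfrac{20+\sqrt{130}}{9}\,|\Pscr| \;\leq\; 8.38\,|\Pscr|,
\]
as claimed. There is no genuine obstacle here beyond verifying the input hypotheses; the only points to check carefully are (i) that $D$-cycles indeed form an uncrossable family (which is asserted in the paper), and (ii) that both the primal rounding (from the main Theorem) and the dual rounding (from~\cite{BerY12}) run in polynomial time given only the weight oracle, so that the combined procedure is polynomial as required.
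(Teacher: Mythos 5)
Your proof is correct and matches the paper's intended argument: the paper derives this corollary exactly by multiplying the new primal integrality gap $\frac{20+\sqrt{130}}{9}$ (from the main Theorem applied to the uncrossable family of $D$-cycles, with the obvious shortest-path weight oracle) by the dual integrality gap of $2.4$ from Berman and Yaroslavtsev, linked through strong LP duality. The paper does not spell out the proof, but your reconstruction is precisely the route it takes.
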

 
 For most of the uncrossable families discussed above a lower bound of $2$ on the integrality gaps of~\eqref{eq:lp} and~\eqref{eq:lp_edgedisjoint} is known, which is also the best-known lower bound for general uncrossable families.
 Most of the corresponding examples can be constructed by modifying a $K_4$.
 Regarding the Erd\H{o}s--P\'osa ratio, the best-known lower bound for vertex-disjoint cycle packing (for uncrossable families) is still $2$, but for  edge-disjoint cycle packing and transversal Kr\'al\textquoteright{} (see~\cite{MaYZ13}) showed a lower bound of $4$ on the Erd\H{o}s--P\'osa ratio for the family of all cycles in $G$.
 See~\cite{SchTV22} for a more detailed overview on lower bounds for the integrality gaps and Erd\H{o}s--P\'osa ratios.
 
 There exist other examples of uncrossable cycle families that have been studied.
 For example, Rautenbach and Regen~\cite{RauR09} considered the Cycle Packing Problem with the family of shortest cycles in $G$, which is also uncrossable.
 Furthermore, the (uncrossable) family of all cycles that contain at least one vertex from a specified set $S \subseteq V(G)$ has been considered, for example by Goemans and Williamson~\cite{GoeW98}.
 This work yields the best-known upper bounds for the integrality gaps of the corresponding Cycle Packing LPs.
 
 For cycle families $\Cscr$ that are not uncrossable surprisingly few is known.
 For example, the set of all even cycles is not uncrossable.
 Here Göke et al.~\cite{GokKMS22} generalized Goemans and Williamson's~\cite{GoeW98} technique to get a constant upper bound on the vertex transversal LP;
 for the Cycle Packing Problem no constant-factor approximation algorithm is known.

 \newpage
 
\section{Preliminaries}\label{sec:preliminaries}

For the rest of the paper we fix a planar graph $G$, together with an embedding in the sphere $\mathbb{S}^2$, and an uncrossable family $\Cscr$ of cycles in $G$.
By a result by Schlomberg, Thiele and Vygen~\cite{SchTV22} we can compute optimum solutions to~\eqref{eq:lp} and~\eqref{eq:lp_edgedisjoint} with \emph{laminar} support, i.e.\@ any two cycles in the support can only ``touch'' but not ``cross'':

\begin{definition}\label{def:laminar_family}
 Let $G$ be a planar graph, embedded in the sphere.
 Deleting the embedding of a cycle $C$ in $G$ from the sphere results in two connected components of the sphere, which we call the \emph{sides} of $C$.
 Given a side $S$ of $C$ and another cycle $C^\prime$ in $G$, we say that $C^\prime$ is \emph{inside} $S$ or that $S$ \emph{contains} $C^\prime$ if $S$ contains a side of $C^\prime$.
 
 We call a family $\Lscr$ of cycles in $G$ \emph{laminar} if for any $C_1, C_2 \in \Lscr$ there exist sides $S_1$ of $C_1$ and $S_2$ of $C_2$ that are disjoint.
 \end{definition}
 
 \begin{definition}
 Let $\Lscr$ be a laminar family of cycles in a planar graph $G$, embedded in the sphere.
 By $V(\Lscr)$ and $E(\Lscr)$, respectively, we denote the set of all vertices, respectively edges, in cycles of $\Lscr$.
 
 The cycles corresponding to $\subseteq$-minimal sides in $\Lscr$ are called \emph{one-sided}, while the others are called \emph{two-sided}.
 For a one-sided cycle, the $\subseteq$-minimal side is also called \emph{one-sided}.
 
 We call two cycles $C_1, C_2 \in \Lscr$ \emph{homotopic} if there exist sides $S_1$ of $C_1$ and $S_2$ of $C_2$ that contain the same set of one-sided sides.
 
 For any connected component $D$ of $E(\Lscr)$ we call the set of all cycles in $\Lscr$ that are in $D$ a \emph{connected component} of $\Lscr$.
 
 A \emph{chain} is a laminar family of cycles with only two one-sided sides.
\end{definition}

It is easy to see that the sides $S_1$ and $S_2$ in Definition~\ref{def:laminar_family} are unique if $C_1 \neq C_2$.
Also, our notion of laminarity is equivalent to the definition in~\cite{SchTV22}.
In particular we can use the following Lemma from~\cite{SchTV22}:

\begin{lemma}\label{lemma:laminar_lp_solution}
 Let $G$ be a planar graph, embedded in the sphere, and $\Cscr$ an uncrossable family of cycles in $G$.
 Then there exist optimum solutions to the LPs~\eqref{eq:lp} and~\eqref{eq:lp_edgedisjoint} with laminar support.
 If $\Cscr$ has a weight oracle such solutions can be computed in polynomial time.
\end{lemma}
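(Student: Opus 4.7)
The plan has two components: producing an LP optimum in polynomial time, and then converting any optimum into one with laminar support.

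For the computational part I would run the ellipsoid method on the dual. The dual of~\eqref{eq:lp} minimises $\sum_{v \in V(G)} y_v$ over nonnegative vertex weights $y$ subject to $\sum_{v \in C} y_v \ge 1$ for every $C \in \Cscr$, and analogously for~\eqref{eq:lp_edgedisjoint}. Separation is precisely the task of finding a cycle in $\Cscr$ of minimum total $y$-weight, which is exactly what the weight oracle delivers. The ellipsoid method therefore produces an optimum dual in polynomial time, and by recording the polynomially many cycles returned by separation queries we recover a primal optimum supported on a polynomial number of cycles.

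For the uncrossing part, suppose an optimum $x$ has two cycles $C_1, C_2$ in its support that violate laminarity in the sense of Definition~\ref{def:laminar_family}, i.e.\ every side of $C_1$ meets every side of $C_2$. Then some subpath $P_2 \subseteq C_2$ shares only its endpoints with $C_1$, so Definition~\ref{def:uncrossable} furnishes a replacement pair $C_1' = P_1 + P_2 \in \Cscr$ and a cycle $C_2' \in \Cscr$ with $C_2' \subseteq (C_1 - P_1) + (C_2 - P_2)$. Crucially, the total multiplicity of every edge (and every vertex) in $\{C_1', C_2'\}$ is at most its multiplicity in $\{C_1, C_2\}$. Shifting an amount $\epsilon > 0$ of weight from $x_{C_1}, x_{C_2}$ onto $x_{C_1'}, x_{C_2'}$ therefore preserves feasibility of both LPs and preserves the objective value, while moving towards a more laminar support.

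The main obstacle is showing that only polynomially many such shifts are needed and that the support remains polynomial throughout. The standard remedy is to choose $\epsilon$ maximally so that $x$ remains a vertex of the optimum face of the LP polytope: at any such vertex the support is bounded by the number of tight constraints and is hence polynomial. To bound the number of iterations I would introduce a potential that strictly decreases under each uncrossing step, for example $\Phi(x) = \sum_C x_C \cdot f(C)$ where $f(C)$ counts the faces enclosed by $C$ in the embedding, and show that $\Phi$ drops by at least an inverse-polynomial amount per step. Combined with the polynomial-size support from the ellipsoid step, this yields the claimed polynomial-time algorithm, as worked out in detail in~\cite{SchTV22}.
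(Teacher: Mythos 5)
The paper does not actually prove this lemma: it is stated and used as an imported result, with the preceding sentence reading ``In particular we can use the following Lemma from~\cite{SchTV22}.'' So there is no in-paper proof to compare your sketch against, and I can only assess it on its own terms.

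Your two-phase outline (ellipsoid on the covering dual to obtain an optimum with polynomial support, then repeated uncrossing) is the right shape and, to my knowledge, the shape of the argument in~\cite{SchTV22}. The ellipsoid step is correct: dual separation is exactly minimum-$y$-weight cycle in $\Cscr$, which is what the oracle answers. The feasibility-preservation claim is also true, though it merits a short case check rather than a bare ``crucially'' --- the delicate case is a vertex $v$ lying in $P_1$ and in $C_2 - P_2$, for which one needs that the interior of $P_2$ avoids $C_1$. Where the argument genuinely breaks down is termination. The potential $\Phi(x)=\sum_C x_C f(C)$, with $f(C)$ the number of enclosed faces, is not guaranteed to strictly decrease: if $C_1$ and $C_2$ cross in exactly two vertices and Definition~\ref{def:uncrossable} hands you the ``lens'' $C_1'=P_1+P_2$ together with the outer boundary $C_2'\subseteq(C_1-P_1)+(C_2-P_2)$, then inclusion--exclusion on the enclosed faces gives $f(C_1')+f(C_2')=f(C_1)+f(C_2)$, so $\Phi$ does not drop; and since the definition only asserts existence of some $P_1$, you cannot steer toward the other uncrossing that would make $\Phi$ decrease. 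A merely non-increasing potential bounds nothing. Relatedly, ``choose $\epsilon$ maximally so that $x$ remains a vertex of the optimum face'' is not a well-defined move: any feasible shift leaves the vertex. What you presumably mean is shift and then re-extremize over the optimum face, but then one must additionally argue that the re-extremization does not undo the uncrossing progress and keeps the support polynomial. These are exactly the technical points that your sketch defers to~\cite{SchTV22} and that remain unproved here, so as written the laminarization half of the lemma is not established.
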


\section{Bounding the integrality gap}\label{sec:bounding_the_gap}

In this section we explain our main algorithm, which is a slight generalization of the greedy rounding algorithm used in~\cite{SchTV22}.
We first only analyze the easiest variant of the algorithm.
This already yields an upper bound of $4$ on the integrality gap for the cycle packing LP,
equalizing the best known upper bounds for edge-disjoint and vertex-disjoint Cycle Packing.
In Section~\ref{sec:improving_the_bounds} we will analyze a more refined version of the algorithm, which yields an upper bound of below $3.5$.

Here we only describe the algorithm for vertex-disjoint cycle packing.
The edge-disjoint version can be deduced similarly or sometimes even easier;
also there exists a reduction for laminar cycle families (\cite{SchTV22}) that allows us to immediately transfer our results from vertex-disjoint to edge-disjoint packing.
For more details we refer to Section~\ref{sec:edge_disjoint}.

\begin{definition}\label{def:neighbours}
  Let $\Lscr$ be a laminar family of cycles in a planar graph $G$, embedded in the sphere.
  Let $\Lscr_1$ be the set of one-sided cycles in $\Lscr$.
  For any $C \in \Lscr$ let $\Nscr_\Lscr(C)$ be the set of ``neighbours'' of $C$, i.e.\@ cycles in $\Lscr$ that contain a vertex of $C$.
  In particular, $C \in \Nscr_\Lscr(C)$.
  Define $\Nscr^1_\Lscr(C) := \Nscr_\Lscr(C) \cap \Lscr_1$ to be the set of one-sided ``neighbours'' of $C$.
  \end{definition}
 
 Given this definition, we can outline our algorithm: 
 
 \begin{algorithm}
 \caption{Greedy Rounding for Cycle Packing}\label{alg:main_alg}
 \hspace*{\algorithmicindent}
 \textbf{Input:} A planar graph $G$ and an uncrossable family $\Cscr$ of cycles in $G$.
 \newline
 \hspace*{\algorithmicindent}
 \textbf{Output:} A set $\Lscr^* \subseteq \Cscr$ of pairwise vertex-disjoint cycles.
  \begin{algorithmic}[1]
   \State\label{step:embed_G} Compute an embedding of $G$ in the sphere.
   \State\label{step:compute_lp_solution} Compute an optimum solution $x$ to the LP (\ref{eq:lp}) with
   laminar support.
   \While{$x \neq 0$}
   \State\label{step:make_x_structured} Modify $x$ to make it structured (see Definition~\ref{def:structured_lp_solution}).
   \State Let $\Lscr_x := \{C \in \Cscr : x_C > 0\}$ be the support of $x$.
   \State\label{step:choose_cycle_set} Pick a non-empty subset $\Fscr^* \subseteq \Lscr_x$ of pairwise vertex-disjoint cycles.
   \State Add all cycles in $\Fscr^*$ to the solution $\Lscr^*$.
   \State Set $x_C := 0$ for all $C \in \bigcup_{C^\prime \in \Fscr^*}\Nscr_{\Lscr_x}(C^\prime)$.
   \EndWhile
   \State Output $\Lscr^*$.
  \end{algorithmic}
 \end{algorithm}
 
 Throughout the algorithm we maintain a feasible solution $x$ to the LP~\eqref{eq:lp} with laminar support $\Lscr_x$ and a set $\Lscr^*$ of pairwise vertex-disjoint cycles with $V(\Lscr_x) \cap V(\Lscr^*) = \emptyset$.
 Step~\ref{step:embed_G} can be done in polynomial time~\cite{ChiNAO85}.
 For step~\ref{step:compute_lp_solution} we apply Lemma~\ref{lemma:laminar_lp_solution}.
 Then, in each iteration we add a set $\Fscr^*$ of cycles in the support of $x$ to $\Lscr^*$ and set $x$ on all neighbours of $\Fscr^*$ to $0$.
 
 We will make use of the following observation:
 If in each iteration we find a set $\Fscr^*$ with $x\left(\bigcup_{C \in \Fscr^*} \Nscr_{\Lscr_x}(C)\right) \leq \alpha |\Fscr^*|$ then the algorithm will return a solution of size at least $\frac{1}{\alpha}$ times the LP value.
 Thus, in order to bound the integrality gap of the cycle packing LP we only need to analyze the minimum values of $\frac{x\left(\bigcup_{C \in \Fscr^*} \Nscr_{\Lscr_x}(C)\right)}{|\Fscr^*|}$ that we can achieve with different choices of $\Fscr^*$.
 
 Note that after step~\ref{step:compute_lp_solution} the algorithm only operates on the (explicitly given) set of cycles in the support of the LP solution and does not depend on $\Cscr$ any more.
 Both the uncrossing property and the weight oracle are used only in this step.
 In particular, our results apply to any cycle family $\Cscr$ where step~\ref{step:compute_lp_solution} can be done, for example if $\Cscr$ is already laminar.
 
 We first explain in more detail what step~\ref{step:make_x_structured} does:
 
 \begin{definition}
 Let $\Lscr$ be a laminar family of cycles in a planar graph $G$, embeddded in the sphere.
 We call a two-sided cycle $C \in \Lscr$ \emph{redundant} if it is homotopic to a one-sided cycle in $\Lscr$ (cf.\@ Figure~\ref{fig:redundant_cycles}).
\end{definition}
 
 \begin{definition}\label{def:structured_lp_solution}
  Let $x \in \mathbb{R}^\Cscr$ be a solution to the LP~\eqref{eq:lp}.
  We call it \emph{structured} if the support of $x$ is laminar and each connected component $\Lscr$ of the support of $x$ contains no redundant cycles.
 \end{definition}

\begin{lemma}\label{lemma:no_redundant_cycles}
 Let $x \in \mathbb{R}^\Cscr$ be a feasible solution to the LP~\eqref{eq:lp} with laminar support $\Lscr$.
 Then we can compute a structured solution $x^\prime \in \mathbb{R}^\Lscr$ to~\eqref{eq:lp} with $\sum_{C \in \Lscr} x_C = \sum_{C \in \Lscr} x^\prime_C$ in polynomial time in the size of $\Lscr$.
\end{lemma}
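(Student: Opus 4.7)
My plan is to iteratively eliminate redundant cycles from the support, proceeding by induction on $|\Lscr|$. If $\Lscr$ is already structured, set $x' := x$. Otherwise, pick a connected component containing a redundant two-sided cycle and, within it, an \emph{innermost} pair $(C,C')$: $C$ two-sided, $C'$ one-sided, $C$ homotopic to $C'$, and the side $S_C$ of $C$ containing a side of $C'$ is $\inside$-minimal among sides of two-sided cycles of $\Lscr$ homotopic to $C'$. By the definition of homotopy, any two-sided cycle lying strictly between $C$ and $C'$ on $S_C$ would also be homotopic to $C'$, contradicting innermostness; hence $S_C$ contains no cycle of $\Lscr$ besides $C$ and $C'$.

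The key operation is a weight redistribution that drives $x_C$ to zero while preserving $\sum_{D}x_D$ and LP feasibility; then $C$ leaves the support, $|\Lscr|$ strictly decreases, and the inductive hypothesis finishes the proof. The simplest redistribution transfers all of $x_C$ onto $x_{C'}$: this is automatically consistent at vertices of $V(C)\cap V(C')$ (no change) and at $v\in V(C)\setminus V(C')$ (coverage strictly decreases). The delicate vertices are $v\in V(C')\setminus V(C)$. A key structural observation is that every cycle $D\in\Lscr$ with $v\in D$ and $D\ne C'$ must, by laminarity and the innermost choice, contain $C'$ on one of its sides and hence also contain $C$; such a $D$ therefore passes through some vertex of $C$. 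This ties the LP constraint at $v$ to the LP constraints along $V(C)$, which is what I would use to argue that enough slack exists to absorb the transferred weight.

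I expect the main obstacle to be turning this structural observation into a genuinely feasible redistribution: the LP constraint at some $v\in V(C')\setminus V(C)$ may be tight, leaving no direct room to increase $x_{C'}$. The fix I would attempt is a coordinated shift rather than a single transfer. For each troublesome $v$, I would simultaneously decrease the weight of one of the large cycles $D\ni v$ (creating compensating slack at vertices of $C$, where $C$-adjacent cycles in $\Lscr$ can absorb the freed weight) and transfer the remaining weight to $C'$. Because the support is already laminar and $\Cscr$ is uncrossable, the uncrossing-type machinery underlying Lemma~\ref{lemma:laminar_lp_solution} guarantees that all compensating cycles can be chosen inside $\Lscr$, keeping the new solution in $\mathbb{R}^{\Lscr}$. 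Each iteration is a polynomial-time linear update, and the number of iterations is bounded by $|\Lscr|$, so the overall procedure runs in polynomial time and produces a structured $x'$ with the required total value.
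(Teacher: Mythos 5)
There is a genuine gap, and it stems from a mis-read of the geometry. You correctly establish that $S_C$ contains no cycle of $\Lscr$ besides $C$ and $C'$. But the ``key structural observation'' you then draw for a vertex $v\in V(C')\setminus V(C)$ is wrong: such a $v$ lies strictly inside $S_C$, so by laminarity any cycle $D\ni v$ must have a side contained in $S_C$ (a cycle with $C$ strictly inside one of its sides cannot touch a point strictly interior to $S_C$). Since $S_C$ contains only $C'$, the only cycle through $v$ is $C'$ itself. In other words, there is \emph{no} $D\neq C'$ through $v$, rather than a $D$ that ``contains $C'$ and $C$ and passes through $V(C)$.'' Because of this, the obstacle you anticipate---that the constraint at $v$ might be tight---does not arise in the problematic way you imagine: at $v$ the constraint reads simply $x(C')\le 1$, and the transfer asks for $x(C)+x(C')\le 1$, which follows immediately once you note that $C$ and $C'$ share a vertex (forced by connectedness of $E(\Lscr)$, which is why the paper first splits into connected components; your proposal should do the same).

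As a result, the ``coordinated shift'' you sketch as a fix is both unnecessary and problematic. It is unnecessary because the plain transfer $x'(C'):=x(C)+x(C')$, $x'(C):=0$ is already feasible: vertices in $V(C)\cap V(C')$ see no net change, vertices in $V(C)\setminus V(C')$ see a decrease, and vertices in $V(C')\setminus V(C)$ are covered only by $C'$. It is problematic because invoking ``uncrossing-type machinery underlying Lemma~\ref{lemma:laminar_lp_solution}'' is vague and risks producing cycles outside $\Lscr$, which would violate the requirement $x'\in\mathbb{R}^\Lscr$; the paper's proof avoids uncrossability entirely and only redistributes among cycles already in $\Lscr$. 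Finally, note that the paper's notion of ``structured'' is evaluated per connected component of the support, which is why after the shift one should re-split into components rather than insisting that no cycle in $\Lscr$ be homotopic to a one-sided cycle globally; your description glosses over this.
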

\begin{proof}
 We can consider the connected components of $\Lscr$ separately, so we assume w.l.o.g.\@ that $E(\Lscr)$ is connected.
 Assume that $x$ is not structured.
 Let $C \in \Lscr$ be redundant with a side $S$ that contains no other redundant cycles.
 In particular, $S$ contains only one cycle $C^\prime \neq C$ in $\Lscr$.
 Since $E(\Lscr)$ is connected, $x(C) + x(C^\prime) \leq 1$ holds.
 Thus, we can shift the LP value from $C$ to $C^\prime$, i.e.\@ set $x^\prime(C^\prime) := x(C) + x(C^\prime)$ and $x^\prime(C) := 0$, removing $C$ from the support.
 This does not affect feasibility of the LP solution since it increases the LP value only on vertices strictly inside $S$, which are contained in no other cycles than $C^\prime$ due to minimality of $S$.
 See Figure~\ref{fig:redundant_cycles}.
 
 Applying this reduction at most $|\Lscr|$ times results in a solution as desired.
\end{proof}

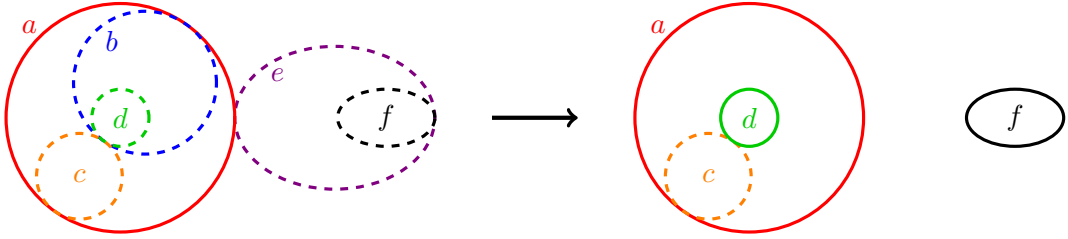
\begin{figure}[htb]
 \begin{center}
  \begin{tikzpicture}[scale=0.38, very thick]
   \draw[color=violet, dashed] (7.5,0) ellipse (3.5 and 2.5);
   \draw[color=red] (0,0) circle (4);
   \draw[dashed] (9.3,0) ellipse (1.7 and 1);
   \draw[color=blue, dashed] (55:1.5) circle (2.5);
   \draw[color=orange, dashed] (235:2.5) circle (1.5);
   \draw[color=darkgreen, dashed] (0,0) circle (1);
   \node[color=red] at (135:4.5) {$a$};
   \node[color=blue] at (-.3,2.7) {$b$};
   \node[color=orange] at (235:2.5) {$c$};
   \node[color=darkgreen] at (0,0) {$d$};
   \node[color=violet] at (5.5, 1.5) {$e$};
   \node[color=black] at (9.3,0) {$f$};
   \draw[ultra thick, ->] (13,0) -- (16,0);
   \begin{scope}[shift={(22,0)}]
   \draw[color=red] (0,0) circle (4);
   \draw[] (9.3,0) ellipse (1.7 and 1);
   \draw[color=orange, dashed] (235:2.5) circle (1.5);
   \draw[color=darkgreen] (0,0) circle (1);
   \node[color=red] at (135:4.5) {$a$};
   \node[color=orange] at (235:2.5) {$c$};
   \node[color=darkgreen] at (0,0) {$d$};
   \node[color=black] at (9.3,0) {$f$};
   \end{scope}
  \end{tikzpicture}
 \end{center}
 \caption{The left picture shows a possible laminar support of a feasible LP solution, consisting of six cycles.
 Dashed cycles have LP value $\frac{1}{3}$, while the others have LP value $\frac{2}{3}$.
 The cycles $b$ and $e$ are redundant because their interiors each contain only one other cycle; also $a$ is redundant because its exterior only contains the one-sided side of $f$.\newline
 In the proof of Lemma~\ref{lemma:no_redundant_cycles} we would pick the interiors of $b$ and $e$ as $S$ in the first and second step, increasing the LP value of $d$ and $f$ and removing $b$ and $e$ from the support.
 This yields a support as in the right image.
 The cycle $a$ is still redundant, however in the laminar family given by its connected component it is one-sided and therefore not redundant.
 Thus, the solution is structured.
 \label{fig:redundant_cycles}}
\end{figure}

  Next, we analyze the ratio $\frac{x\left(\bigcup_{C \in \Fscr^*} \Nscr_{\Lscr_x}(C)\right)}{|\Fscr^*|}$ that we can achieve.
  In this section we only consider the case that $\Fscr^*$ consists of a single one-sided cycle.
  We use the following Structure Lemma.
  The proof can be found in Section~\ref{sec:proof_of_main_lemma}.
  
  \begin{lemma}\label{lemma:structure_lemma}
  Let $\Lscr$ be a laminar family of cycles in a planar graph $G$, embedded in the sphere, such that $\Lscr$ contains no redundant cycles.
  Let $\Lscr_1$ be the set of one-sided cycles in $\Lscr$.
  Then there is a multi-subset $M^* \subseteq V(G)$ with $|M^*| \leq 3 |\Lscr_1|$ such that for any $C \in \Lscr$ we have $|M^* \cap V(C)| \geq |\Nscr_\Lscr^1(C) \setminus \{C\}|$.
 \end{lemma}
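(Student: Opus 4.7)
The plan is to prove Lemma~\ref{lemma:structure_lemma} by induction on $|\Lscr|$, matching the two-case roadmap sketched in the introduction: a direct planar construction when $\Lscr = \Lscr_1$, and a split along a minimal two-sided cycle otherwise.

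For the base case $\Lscr = \Lscr_1$, the one-sided sides $\{S_C : C \in \Lscr_1\}$ are pairwise disjoint open disks on the sphere. I would build an auxiliary planar graph $A$ on the vertex set $\Lscr_1$ together with one extra vertex per connected component of $\mathbb{S}^2 \setminus \bigcup_C \overline{S_C}$, and with edges recording adjacencies between these regions across vertices of $G$. Embedding each vertex of $A$ inside its region makes $A$ planar, so Euler's formula gives $|E(A)| \leq 3|V(A)|-6$. I would charge each edge of $A$ between $v_C$ and $v_{C'}$ to a specific shared vertex in $V(C)\cap V(C')$ and place that vertex into $M^*$, so that $|M^* \cap V(C)|$ equals the number of $A$-edges at $v_C$, which is exactly $|\Nscr_\Lscr^1(C)\setminus\{C\}|$. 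The subtle point is that many one-sided cycles may meet at a single vertex of $G$; there the cyclic ordering of the one-sided sides around that vertex must be exploited so that $A$ stays planar rather than producing a complete subgraph $K_k$.

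For the inductive step, pick a two-sided cycle $C_0 \in \Lscr\setminus\Lscr_1$ that is $\subseteq$-minimal on one of its sides, so one side $S$ of $C_0$ contains only cycles that are one-sided in $\Lscr$. Define the laminar subfamilies $\Lscr_A := \{C \in \Lscr : C \text{ is inside } S\} \cup \{C_0\}$ and $\Lscr_B := \{C \in \Lscr : C \text{ is inside the other side of } C_0\} \cup \{C_0\}$; each is strictly smaller than $\Lscr$ and treats $C_0$ as a one-sided cycle (possibly after deleting two-sided cycles that become redundant in the restricted family, leveraging the fact that $\Lscr$ itself is redundancy-free). Applying the inductive hypothesis to each yields $M^*_A$ and $M^*_B$, and I would set $M^* := M^*_A \cup M^*_B$. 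The covering property is inherited side by side: a cycle strictly inside one side of $C_0$ has the same one-sided neighbours in $\Lscr$ as in the corresponding subfamily, and the one-sided neighbours of $C_0$ in $\Lscr$ partition cleanly into its neighbours in $\Lscr_A$ and $\Lscr_B$.

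The main obstacle is the size accounting. A naive sum yields $|M^*| \leq 3(|\Lscr_A \cap \Lscr_1|+1) + 3(|\Lscr_B \cap \Lscr_1|+1) = 3|\Lscr_1|+6$, exceeding the target by $6$ because $C_0$ is counted as a one-sided cycle in both subfamilies. Closing this gap will require either strengthening the inductive invariant with a negative additive slack (borrowing the $-6$ from Euler's formula in the base case, which leaves just enough room to accommodate the $+1$ contribution of $C_0$ from each recursive call) or arranging the shared-vertex charges so that vertices of $V(C_0)$ placed in $M^*_A$ and $M^*_B$ can be merged rather than added. Either route demands care in small and degenerate cases, and the no-redundant-cycles hypothesis on $\Lscr$ is essential here: a redundant $C_0$ would permit a degenerate split (one side of $C_0$ containing only a single one-sided cycle homotopic to $C_0$) in which the induction gains nothing. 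Matching the $+6$ combination cost against the Euler slack available in each base case is where I expect the main technical work of the proof to lie.
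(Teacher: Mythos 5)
Your overall framework matches the paper's: induction on $|\Lscr|$, a planar-dual-plus-Euler construction in the base case $\Lscr = \Lscr_1$, a split along a two-sided cycle $C_0$ that is minimal on one side, and the idea of strengthening the inductive bound to $|M^*| \le 3|\Lscr_1| - 6$ so that the double-counting of $C_0$ across the two subfamilies is exactly absorbed. That last observation is correct and is precisely what the paper does. However, two steps of your proposal do not go through as stated, and both are where the real work of the proof lies.

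In the base case, you assert that the degree of $v_C$ in $A$ equals $|\Nscr_\Lscr^1(C)\setminus\{C\}|$. This fails whenever $k>3$ one-sided cycles meet at a common vertex $v$: each such cycle has $k-1$ one-sided neighbours through $v$, but $A$ (being planar) records only the two cyclically adjacent ones, so the charging undershoots. The paper's construction additionally places $k-3$ extra copies of $v$ into $M^*$ for every vertex of multiplicity $k$, and stays within the Euler budget by triangulating the face of the auxiliary graph that $v$ lies in with $k-3$ additional edges. You anticipate that the cyclic order around $v$ must be exploited, but without the multiplicity copies the covering bound is simply false.

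In the inductive step, the key claim --- that a cycle strictly inside one side of $C_0$ has the same one-sided neighbours in $\Lscr$ as in the corresponding subfamily --- is wrong. A two-sided cycle $C$ lying on side $S_2$ of $C_0$ can share vertices of $C_0$ with arbitrarily many one-sided cycles on the other side $S_1$. In $\Lscr_B$ these are collapsed into a single one-sided neighbour ($C_0$ itself), so $M^*_B$ accounts for at most one of them, while $M^*_A$ gives no guarantee whatsoever on $C$, since $C \notin \Lscr_A$. Taking the naive union $M^*_A \cup M^*_B$ therefore does not satisfy the covering condition for two-sided cycles in $\Lscr_B$. This is exactly the obstruction that forces the paper to work with multisets of \emph{incidences} rather than vertices and to introduce the shifting maps $f_1, f_2$: before taking the union, each incidence in $M_i$ between $C^*$ and a one-sided cycle $N$ inside $S_i$ is replaced by a carefully chosen sub-incidence between $N$ and a cycle across $C^*$, precisely so that the replacement hits the two-sided cycles on the other side that would otherwise be undercovered. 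Your proposal has no counterpart to this mechanism. A minor further discrepancy: the paper proves its core incidence lemma for arbitrary laminar families (inserting an artificial $C^*$ when $\Lscr=\Lscr_1$ and $|\Lscr|>3$), and the no-redundancy hypothesis is used only at the very end, to pass from incidences back to vertices via $|\Nscr_\Lscr^1(C)\setminus\{C\}| \le |\Iscr_\Lscr^1(C)|$.
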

\begin{lemma}\label{lemma:x_is_small_on_average}
  Let $x$ be a structured solution to the vertex-disjoint cycle packing LP.
  Let $\Lscr$ be a connected component of the support of $x$ and $\Lscr_1$ the set of one-sided cycles in $\Lscr$.
  Then
  \begin{equation*} 
   \sum_{C \in \Lscr_1} x(\Nscr_\Lscr(C) \setminus \{C\}) \leq 3 |\Lscr_1|
  \end{equation*}
 \end{lemma}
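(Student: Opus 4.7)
The plan is to apply the Structure Lemma directly, swap the order of summation, and then invoke LP feasibility at each vertex. Since $x$ is structured, the connected component $\Lscr$ of its support contains no redundant cycles, so Lemma~\ref{lemma:structure_lemma} supplies a multi-subset $M^* \subseteq V(G)$ with $|M^*| \leq 3|\Lscr_1|$ satisfying $|M^* \cap V(C)| \geq |\Nscr_\Lscr^1(C) \setminus \{C\}|$ for every $C \in \Lscr$.

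The first key observation is that the neighbourhood relation is symmetric: $C' \in \Nscr_\Lscr(C)$ iff $V(C) \cap V(C') \neq \emptyset$ iff $C \in \Nscr_\Lscr(C')$. Exchanging the order of summation therefore gives
\begin{equation*}
\sum_{C \in \Lscr_1} x(\Nscr_\Lscr(C) \setminus \{C\})
= \sum_{C' \in \Lscr} x_{C'} \cdot \bigl|\{C \in \Lscr_1 : C' \in \Nscr_\Lscr(C),\, C \neq C'\}\bigr|
= \sum_{C' \in \Lscr} x_{C'} \cdot |\Nscr_\Lscr^1(C') \setminus \{C'\}|.
\end{equation*}

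Now I would plug in the Structure Lemma bound and swap sums once more, this time interpreting $|M^* \cap V(C')|$ as a sum over the elements of the multiset $M^*$:
\begin{equation*}
\sum_{C' \in \Lscr} x_{C'} \cdot |\Nscr_\Lscr^1(C') \setminus \{C'\}|
\;\leq\; \sum_{C' \in \Lscr} x_{C'} \cdot |M^* \cap V(C')|
\;=\; \sum_{v \in M^*} \sum_{C' \in \Lscr :\, v \in V(C')} x_{C'}.
\end{equation*}
By feasibility of $x$ for the LP~\eqref{eq:lp}, each inner sum is at most $1$, and multiplicities in $M^*$ contribute the expected number of summands, so the total is at most $|M^*| \leq 3|\Lscr_1|$, which is exactly the desired bound.

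There is no real obstacle here: the entire substance of the argument is packed into Lemma~\ref{lemma:structure_lemma} (to be proved in Section~\ref{sec:proof_of_main_lemma}), and the only mild point to verify in the write-up is that both swaps of summation are legitimate — the first relying on the symmetry of $\Nscr_\Lscr$, the second on correctly handling $M^*$ as a multi-subset so that $\sum_{v \in M^*}$ respects multiplicities.
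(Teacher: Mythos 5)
Your argument is exactly the paper's own proof: invoke the Structure Lemma on the (laminar, redundant-cycle-free) connected component $\Lscr$ to obtain $M^*$, rewrite the sum by the symmetry of $\Nscr_\Lscr$ as $\sum_{C'\in\Lscr} x_{C'}\,|\Nscr_\Lscr^1(C')\setminus\{C'\}|$, bound that by $\sum_{C'\in\Lscr} x_{C'}\,|M^*\cap V(C')|$, and finish by exchanging sums and applying LP feasibility at each $v\in M^*$ (with multiplicity). Both the decomposition and the point at which each hypothesis is used match the paper step for step.
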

 \begin{proof}
  By the Structure Lemma~\ref{lemma:structure_lemma}, choose $M^* \subseteq V(G)$ with $|M^*| \leq 3 |\Lscr_1|$ such that for any $C \in \Lscr$ we have $|M^* \cap V(C)| \geq |\Nscr_\Lscr^1(C) \setminus \{C\}|$.
  We get
  \begin{align*}
   & \sum_{C \in \Lscr_1} x(\Nscr_\Lscr(C) \setminus \{C\}) \\
   = & \ \sum_{C \in \Lscr} x(C) \cdot |\Nscr^1_\Lscr(C) \setminus \{C\}| \\
   \leq & \ \sum_{C \in \Lscr} x(C) \cdot |M^* \cap V(C)| \\
   \leq & \ \sum_{v \in M^*} \sum_{C \in \Lscr : v \in C} x(C) \\
   \leq & \ |M^*| \\
   \leq & \ 3 |\Lscr_1|
  \end{align*}
  \end{proof}
  
  Since the LP value of each single cycle itself is bounded by $1$ this immediately yields an upper bound of $4$ for the integrality gap of (\ref{eq:lp}):
  
  \begin{theorem}\label{thm:ub_4}
  Let $G$ be a planar graph, embedded in the sphere, and $\Cscr$ an uncrossable family of cycles in $G$.
  Then there exists an integral solution to the vertex-disjoint cycle packing LP with at least $\frac{1}{4}$ the LP value.
  If $\Cscr$ is given by a weight oracle we can compute such a solution in polynomial time.
 \end{theorem}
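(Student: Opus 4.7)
The plan is to run Algorithm~\ref{alg:main_alg} and, at each iteration, pick the set $\Fscr^*$ to be a single, carefully chosen one-sided cycle so that the ratio $\frac{x(\bigcup_{C \in \Fscr^*}\Nscr_{\Lscr_x}(C))}{|\Fscr^*|}$ is at most $4$. Once this is established, the observation given immediately after the algorithm yields the $\frac{1}{4}$-approximation guarantee.

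First, I would invoke Lemma~\ref{lemma:laminar_lp_solution} to obtain an optimum LP solution with laminar support, and then Lemma~\ref{lemma:no_redundant_cycles} to make $x$ structured in Step~\ref{step:make_x_structured} without changing its objective value. Next, in Step~\ref{step:choose_cycle_set}, pick any connected component $\Lscr$ of the (non-empty) support $\Lscr_x$ and consider its one-sided cycles $\Lscr_1$. Since $\Lscr$ is non-empty, $\Lscr_1$ is non-empty (a $\subseteq$-minimal side always exists). Applying Lemma~\ref{lemma:x_is_small_on_average} to this component gives
\begin{equation*}
\sum_{C \in \Lscr_1} x(\Nscr_\Lscr(C) \setminus \{C\}) \leq 3 |\Lscr_1|,
\end{equation*}
so by averaging there is some $C^* \in \Lscr_1$ with $x(\Nscr_\Lscr(C^*) \setminus \{C^*\}) \leq 3$. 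Choose $\Fscr^* := \{C^*\}$.

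To bound the total LP value removed, note that $x(C^*) \leq 1$ by LP feasibility (applied to any vertex of $C^*$), so $x(\Nscr_{\Lscr_x}(C^*)) = x(\Nscr_\Lscr(C^*)) \leq 3 + 1 = 4 = 4 \cdot |\Fscr^*|$, where the first equality uses that cycles outside the component $\Lscr$ share no vertex with $C^*$. Summing over all iterations, the total LP value zeroed out is at most $4 |\Lscr^*|$, so $|\Lscr^*| \geq \frac{1}{4} \sum_C x_C$, proving the $\frac{1}{4}$ integrality gap bound.

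For the algorithmic statement, each iteration runs in polynomial time: the LP can be solved via the weight oracle (Lemma~\ref{lemma:laminar_lp_solution}), Lemma~\ref{lemma:no_redundant_cycles} is polynomial, finding a suitable $C^*$ only requires evaluating $x(\Nscr_\Lscr(C) \setminus \{C\})$ for each one-sided $C$ in some component, and the support strictly shrinks each iteration so there are at most polynomially many iterations. The main work lies in the Structure Lemma~\ref{lemma:structure_lemma} itself, which is proved separately; modulo that lemma, the argument here is essentially the averaging step together with the bound $x(C^*) \leq 1$.
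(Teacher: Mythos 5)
Your proof is correct and follows the paper's own argument essentially step for step: solve the LP with laminar support, apply Lemma~\ref{lemma:no_redundant_cycles} to make it structured, restrict to a connected component, use Lemma~\ref{lemma:x_is_small_on_average} together with an averaging argument and the bound $x(C^*)\le 1$ to find a one-sided cycle whose neighbourhood has LP mass at most $4$, and iterate. The only cosmetic difference is that the paper folds the $+1$ for $x(C^*)$ into the averaging inequality before choosing $C^*$, whereas you average first and add the $1$ afterward; these are the same computation.
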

 \begin{proof}
  Let $x$ be an optimum solution to the LP~\eqref{eq:lp} with laminar support, as given by Lemma~\ref{lemma:laminar_lp_solution}.
  By applying Lemma~\ref{lemma:no_redundant_cycles} we can assume $x$ to be structured.
  Let $\Lscr_x := \{C \in \Cscr : x_C > 0\}$ be the (laminar) support of $x$.
  We proceed on each connected component of $\Lscr_x$ individually, so we may assume $E(\Lscr_x)$ to be connected.
  
  Let $\Lscr_1 \subseteq \Lscr_x$ be the set of one-sided cycles.
  In each step of our greedy rounding algorithm we add a one-sided cycle $C^*$ in $\Lscr_x$ to our solution and set $x$ on all cycles containing a vertex of $C^*$ to $0$, removing them from the support of $x$.
  
  Lemma~\ref{lemma:x_is_small_on_average} implies
  \begin{equation*}
   \sum_{C \in \Lscr_1} x(\Nscr_{\Lscr_x}(C))
   \leq 3 |\Lscr_1| + \sum_{C \in \Lscr_1} x(C)
   \leq 4 |\Lscr_1|
  \end{equation*}
  So there exists a one-sided cycle $C^*$ where removing $\Nscr_{\Lscr_x}(C^*)$ decreases $x$ by at most $4$.
  
 After the first iteration we again apply Lemma~\ref{lemma:no_redundant_cycles} and split the support of $x$ into connected components.
 Iterating this procedure until $x = 0$ yields a solution as desired.
 
 Note that Lemma~\ref{lemma:laminar_lp_solution} works in polynomial time if $\Cscr$ has a weight oracle.
 Thus, also the size of $\Lscr$ is polynomial in the size of $G$ and Lemma~\ref{lemma:no_redundant_cycles} also works in polynomial time.
 In each step we can find $C^*$ by picking the one-sided cycle in $\Lscr_x$ minimizing $x(\Nscr_{\Lscr_x}(C^*))$.
 \end{proof}
 
 The greedy rounding algorithm described above also allows for adding several cycles at once to the solution.
 We will exploit this in Section~\ref{sec:improving_the_bounds} to decrease the upper bound for the integrality gap to below $3.5$.
 
 \section{Proof of the Structure Lemma}\label{sec:proof_of_main_lemma}

 Before we prove the Structure Lemma~\ref{lemma:structure_lemma} for general uncrossable cycle families we first briefly consider the case where no two-sided cycles exist.
 In this case we start by constructing another planar graph $G^\prime$ on vertex set $\Lscr_1 = \Lscr$ as follows:
 
 For any vertex $v \in V(G)$ let $\Lscr_v \subseteq \Lscr$ be the set of cycles containing $v$.
 Since all cycles are one-sided, there is a natural cyclic order $\Lscr_v = \{C_1 =: C_{k+1}, C_2, \dots, C_k\}$ on $\Lscr_v$.
 Then we add for any $i=1, \dots, k$ the edge $\{C_i, C_{i+1}\}$ with its obvious planar embedding to $G^\prime$.
 Finally, we identify homotopic edges in $G^\prime$ (i.e.\@ parallel edges bounding an area homeomorphic to the disk).
 See Figure~\ref{fig:case_only_one_sided}.
 
 Now from $G^\prime$ we can construct our multi-set $M^*$:
 For each $e = \{C_1, C_2\} \in E(G^\prime)$ we add an arbitrary vertex in $V(C_1) \cap V(C_2)$ to $M^*$;
 furthermore, for each vertex $v \in V(G)$ that is contained in $k > 3$ cycles we add $k-3$ copies of $v$ to $M^*$.
 Since in this case $v$ lies inside a face of $G^\prime$ with exactly $k$ edges on its boundary we can construct another planar graph $G^*$ from $G^\prime$ by triangulating each such face $F$ with $k-3$ edges inside $F$ (cf.\@ Figure~\ref{fig:case_only_one_sided}).
 
 This yields a planar graph $G^*$ on vertex set $\Lscr_1$ with $|M^*|$ edges and no homotopic edges.
 Euler's formula implies $|M^*| = |E(G^*)| \leq 3 |V(G^*| - 6 = 3 |\Lscr_1| - 6$.
 
 Let now $C \in \Lscr$ and $\Bscr \subseteq \Nscr_\Lscr^1(C) \setminus \{C\}$ be the set of all neighbours of $C$ that are not connected to $C$ in $G^\prime$.
 By construction of $G^\prime$ this means that for any vertex $v \in V(C)$ that is contained in $k$ cycles at most $k-3$ of them can be in $\Bscr$.
 But we added $k-3$ copies of $v$ to $M^*$.
 This proves $|M^* \cap V(C)| \geq |\Bscr| + |\delta_{G^\prime}(C)| \geq |\Nscr_\Lscr^1(C) \setminus \{C\}|$.
 
 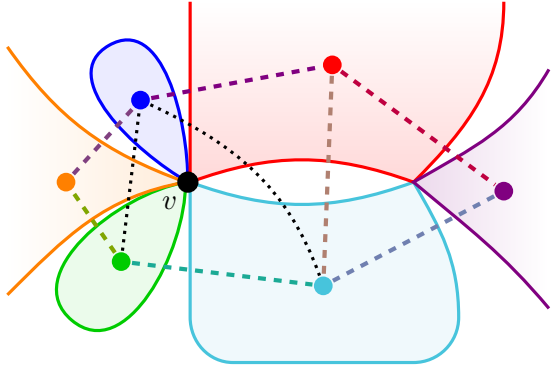
\begin{figure}[htb]
 \begin{center}
  \begin{tikzpicture}[scale=0.6, very thick]
   \fill[draw=white, top color=white, bottom color=red!15] (0.05,0) to[out=20, in=160] (5,0) to[out=45, in=-90] (7,4) -- (0.05, 4) -- (0.05,0);
   \fill[draw=white, left color=white, right color=orange!15] (-4,3) to[out=-55, in=160] (-0.05, 0) to[out=-170, in=45] (-4,-2.5) -- (-4,3);
   \fill[draw=white, left color=violet!15, right color=white] (8,2.5) to[out=-120,in=30] (5,0) to[out=-40,in=130] (8,-2.8) -- (8,2.5);
   
   \draw[blue, fill=blue!8] (0,0) to[out=90, in=30] (120:3.5) to[out=-150, in=150] (0, 0);
   \draw[darkgreen, fill=darkgreen!8] (-0.05,0) to[out=-170, in=140] (-130:4) to[out=-40, in=-90] (-0.05,0);
   \draw[SkyBlue, fill=SkyBlue!8] (0.05,0) to[out=-20, in=-160] (5,0) to[out=-60, in=90] (6,-3) to[out=-90,in=0] (5,-4) -- (1,-4) to[out=180,in=-90] (0.05,-3) -- (0.05,0);

   \draw[orange] (-4,3) to[out=-55, in=160] (-0.05, 0) to[out=-170, in=45] (-4,-2.5);
   \draw[red] (0.05,4) -- (0.05,0) to[out=20, in=160] (5,0) to[out=45, in=-90] (7,4);
   \draw[violet] (8,2.5) to[out=-120,in=30] (5,0) to[out=-40,in=130] (8,-2.8);
   
   \node at (-0.4, -0.45) {$v$};
   \draw[fill] (0,0) circle (0.2);
   \node[fill, circle, inner sep=2.5, color=darkgreen] (c1) at (-130:2.3) {};
   \node[fill, circle, inner sep=2.5, color=orange] (c2) at (-2.7,0) {};
   \node[fill, circle, inner sep=2.5, color=blue] (c3) at (120:2.1) {};
   \node[fill, circle, inner sep=2.5, color=red] (c4) at (3.2, 2.6) {};
   \node[fill, circle, inner sep=2.5, color=violet] (c5) at (7,-0.2) {};
   \node[fill, circle, inner sep=2.5, color=SkyBlue] (c6) at (3, -2.3) {};
   
   \draw[dashed, ultra thick, color=orange!50!darkgreen] (c1) to (c2);
   \draw[dashed, ultra thick, color=orange!50!blue] (c2) to (c3);
   \draw[dashed, ultra thick, color=red!50!blue] (c3) to (c4);
   \draw[dashed, ultra thick, color=red!50!violet] (c4) to (c5);
   \draw[dashed, ultra thick, color=violet!50!SkyBlue] (c5) to (c6);
   \draw[dashed, ultra thick, color=SkyBlue!50!darkgreen] (c1) to (c6);
   \draw[dashed, ultra thick, color=red!50!SkyBlue] (c4) to (c6);
   \draw[dotted] (c1) to (c3);
   \draw[dotted] (c3) to[out=-25, in=110] (c6);
  \end{tikzpicture}
 \end{center}
 \caption{Example for the case that no two-sided cycles exist:
 The coloured cycles are the elements of $\Lscr_1$.
 The vertices of $G^\prime$ are drawn as nodes inside the one-sided sides.
 The edges of $G^\prime$ are drawn as thick dashed lines.
 Since five cycles meet in the vertex $v$ we would add $v$ twice to $M^*$, in addition to the vertices in $M^*$ corresponding to edges of $G^\prime$.
 This is possible while keeping $|M^*| \leq 3 |\Lscr_1| - 6$ because we can triangulate the face of $G^\prime$ that $v$ lies in with two additional edges; as indicated by the dotted lines.
 \label{fig:case_only_one_sided}}
 \end{figure}
 
 Next, we have to consider also two-sided cycles.
 However, we do not know how to extend the relatively easy construction of $G^\prime$ and $G^*$ to this more general case.
 Instead, we will use the notion of \emph{incidences}:
 
 \begin{definition}
  Let $\Lscr$ be a laminar family of cycles in a planar graph $G$, embedded in the sphere.
  Let $\Lscr_1 \subseteq \Lscr$ be the set of one-sided cycles.
  A \emph{neighbour pair} is a pair $(\{C,N\}, v)$ of a set of two cycles $C, N \in \Lscr$ that are not homotopic and a vertex $v \in V(C) \cap V(N)$.
  We call two neighbour pairs $(\{C, N\}, v)$ and $(\{C, N\}, v^\prime)$ \emph{homotopic} if there exist $v$-$v^\prime$-paths $P$ in $C$ and $P^\prime$ in $N$ such that $P+P^\prime$ bounds an area that contains all one-sided sides in $\Lscr$.
  
  It is easy to see that homotopy defines an equivalence relation on neighbour pairs.
  An equivalence class of neighbour pairs for $C$ and $N$ is called an \emph{incidence} between $C$ and $N$ (cf.\@ Figure~\ref{fig:incidences}).
  The \emph{vertex set} $V(I)$ of an incidence $I$ between $C$ and $N$ is the set of all $v$ with $(\{C,N\}, v) \in I$.
  We also denote $I$ by $I = (\{C,N\}, V(I))$.
  For a cycle $C \in \Lscr$ let $\Iscr^1_\Lscr(C)$ be the set of all incidences between $C$ and one-sided cycles in $\Nscr_\Lscr^1(C)$.
  
  Let now $I$ be an incidence between $C \in \Lscr$ and $N \in \Nscr_\Lscr(C)$.
  Let $S_C$ be a side of $C$ and $S_N$ a side of $N$ such that $S_C$ and $S_N$ are disjoint.
  We call an incidence $I^\prime = (\{C^\prime, N^\prime\}, V(I^\prime))$ a \emph{sub-incidence} of $I$ if $C^\prime$ is inside $S_C$, $N^\prime$ is inside $S_N$ and $V(I^\prime) \subseteq V(I)$.
  We call $I$ \emph{minimal} if any sub-incidence of $I$ is equal to $I$.
  We call $I$ \emph{crossing} if $V(I) = \{v\}$ for some $v \in V(G)$ and there exist cycles $C_1, C_2 \in \Lscr$ that also contain $v$ with sides $S_1$ and $S_2$ such that $S_C, S_1, S_N, S_2$ are all disjoint and are ordered in this way around $v$.
  Such incidences are also called \emph{$v$-incidences}.
  If $I$ is not crossing we call it \emph{non-crossing} (cf.\@ Figure~\ref{fig:incidences}).
 \end{definition}
 
 Extending the idea of including the edges of $G^\prime$ in $M^*$, in order to prove Lemma~\ref{lemma:structure_lemma} we will construct a set of incidences instead of a set of vertices.
 
 \begin{definition}\label{def:structured_incidence_set}
  Let $\Lscr$ be a laminar family of cycles in a planar graph $G$, embedded in the sphere.
  Let $\Lscr_1 \subseteq \Lscr$ be the set of one-sided cycles.
  Let $M$ be a multi-set of incidences in $\Lscr$.
  We say that an element $I \in M$ \emph{hits} a cycle $C \in \Lscr$ if $V(I) \subseteq V(C)$.
  We call a cycle $C \in \Lscr$ \emph{$M$-good} if at least $|\Iscr_\Lscr^1(C)|$ elements of $M$ hit $C$.
  We call $M$ \emph{good} if all cycles in $\Lscr$ are $M$-good and $|M| \leq 3|\Lscr_1| - 6$.
  Furthermore, we call $M$ \emph{structured} if the following properties hold:
  \begin{enumerate}
   \item $M$ contains every non-crossing incidence between one-sided cycles.
   \item For each $C \in \Lscr_1$ and $v \in V(C)$ there exist at least as many $v$-incidences in $M$ as there are $v$-incidences between $C$ and $\Nscr_\Lscr^1(C)$ in $\Lscr$.
  \end{enumerate}
 \end{definition}
 
 This notion of structured incidence sets is inspired from the construction of $M^*$ in the case $\Lscr = \Lscr_1$:
 The edges in $G^\prime$ correspond to non-crossing incidences between one-sided cycles, which are included in $M$ by property 1.
 Property 2 makes sure that vertices in which many one-sided cycles meet are included in $M$.
 In particular, we get the following as a direct consequence of the above definition:
 
 \begin{lemma}\label{lemma:structured_implies_good}
  Let $\Lscr$ be a laminar family of cycles in a planar graph $G$, embedded in the sphere.
  Let $M$ be a structured set of incidences in $\Lscr$.
  Then every one-sided cycle is $M$-good.
 \end{lemma}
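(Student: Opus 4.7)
The plan is to fix a one-sided cycle $C \in \Lscr_1$ and exhibit $|\Iscr_\Lscr^1(C)|$ distinct elements of $M$, each of which hits $C$. The key idea is to partition $\Iscr_\Lscr^1(C)$ into the non-crossing incidences and the crossing ($v$-)incidences, and handle each part via one of the two defining properties of a structured set.

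For the non-crossing part, note that every $I \in \Iscr_\Lscr^1(C)$ is an incidence between $C$ and some one-sided cycle $N \in \Nscr_\Lscr^1(C)$; since $C$ itself is one-sided as well, any such $I$ which is non-crossing is an incidence between two one-sided cycles and therefore lies in $M$ by property 1 of Definition~\ref{def:structured_incidence_set}. Moreover $V(I) \subseteq V(C) \cap V(N) \subseteq V(C)$, so each such $I$ hits $C$. Thus each non-crossing incidence in $\Iscr_\Lscr^1(C)$ contributes itself as a distinct element of $M$ hitting $C$.

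For the crossing part, every crossing incidence $I \in \Iscr_\Lscr^1(C)$ has singleton vertex set $V(I) = \{v\}$ for some $v \in V(C)$, and for fixed $v$ the number of such $I$ is exactly the number of $v$-incidences between $C$ and $\Nscr_\Lscr^1(C)$. By property 2 of Definition~\ref{def:structured_incidence_set}, $M$ contains at least that many $v$-incidences (not necessarily involving $C$); but any $v$-incidence has vertex set $\{v\} \subseteq V(C)$ and therefore hits $C$. Summing over the vertices $v \in V(C)$ at which crossing incidences in $\Iscr_\Lscr^1(C)$ occur, we obtain the required number of $v$-incidences in $M$ that hit $C$.

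Finally I need to verify that nothing is double-counted. The non-crossing elements chosen in the first step and the $v$-incidence elements chosen in the second step are disjoint, since a non-crossing incidence is by definition not crossing. Within the second step, the contributions for distinct vertices $v \neq v'$ are automatically distinct since $v$-incidences and $v'$-incidences have different vertex sets. Combining the two counts gives a total of at least
\[
\#\{\text{non-crossing } I \in \Iscr_\Lscr^1(C)\} + \sum_{v \in V(C)} \#\{v\text{-incidences in } \Iscr_\Lscr^1(C)\} \;=\; |\Iscr_\Lscr^1(C)|
\]
distinct elements of $M$ hitting $C$, proving $C$ is $M$-good. There is no real obstacle here beyond careful bookkeeping: the content of the lemma is precisely that the two conditions in Definition~\ref{def:structured_incidence_set} were designed exactly to cover the non-crossing and crossing parts of $\Iscr_\Lscr^1(C)$ respectively.
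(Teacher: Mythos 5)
Your proof is correct. The paper deliberately gives no explicit proof, stating the lemma ``as a direct consequence of the above definition,'' and your argument is precisely the intended unwinding: split $\Iscr_\Lscr^1(C)$ into its non-crossing part (handled by property 1 of Definition~\ref{def:structured_incidence_set}, since $C$ is itself one-sided) and its crossing part (handled, vertex by vertex, by property 2, using that every $v$-incidence with $v\in V(C)$ hits $C$), and observe that the two contributions to $M$ are disjoint because crossing and non-crossing incidences are distinct, while contributions for distinct $v$ have distinct singleton vertex sets.
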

 
 \begin{figure}[htb]
 \begin{center}
  \begin{tikzpicture}[scale=0.6, very thick]
   \fill[draw=white, top color=blue!15, bottom color=white] (-2,-4) to[out=85,in=230] (0,0) to[out=-15,in=195] (4.95,0) -- (4.95,-4) -- (-2, -4);
   
   \fill[draw=white, top color=violet!15, bottom color=white] (10,-4) to[out=105,in=0] (5.05,0) -- (5.05,-4) -- (10,-4);
   
   \draw[SkyBlue, fill=SkyBlue!8] (0, 0) to[out=15, in=165] (5,0) to[out=110, in=70] (0, 0);
   
   \draw[darkgreen, fill=darkgreen!8] (0,0) to[out=120, in=60] (150:3.5) to[out=240, in=180] (0,0);
   
   \begin{scope}[shift={(5,0)}]
   \draw[red, fill=red!8] (0,0) to[out=30,in=-30] (60:3.5) to[out=150, in=90] (0,0);
   \node[thick, red] (c3) at (60:2) {$C_3$};
   \end{scope}
   
   \draw[orange, dashed] (-2.5,4) to[out=-50,in=105] (0,0) to[out=80, in=180] (2.5, 1.9) to[out=0,in=100] (5,0) to[out=15, in=-100] (8.5,4);
   \draw[blue] (-2,-4) to[out=85,in=230] (0,0) to[out=-15,in=195] (4.95,0) -- (4.95,-4);
   \draw[violet] (10,-4) to[out=105,in=0] (5.05,0) -- (5.05,-4);
   \draw[very thick, dashed, ->] (11,0) -- (8,0);
   \draw[very thick, dashed, ->] (8,0) -- (2.5,0);
   \draw[very thick, dashed, ->] (2.5,0) -- (-3,0);

   \node at (0.15, -0.3) {$v$};
   \node at (5.4, -0.3) {$w$};
   \node[thick, darkgreen] (c1) at (150:2) {$C_1$};
   \node[thick, SkyBlue] (c2) at (2.5,0.9) {$C_2$};
   \node[thick, blue] (c4) at (2.5,-2) {$C_4$};
   \node[thick, violet] (c5) at (7,-2) {$C_5$};
   \node[thick, orange] (c6) at (-1.3,3.8) {$C_6$};
   \node[thick] at (-2.6, -0.5) {$C^*$};
   \node[thick] at (10.5,-1) {$S_1$};
   \node[thick] at (10.5,1) {$S_2$};
   \draw[thick, violet, ->, dotted] (c5) to[bend right] (c3);
   \draw[thick, red, ->, dotted] (c3) to[] (c5);
   \draw[thick, blue, ->, dotted] (c4) to[bend left] (c3);
   \draw[thick, SkyBlue, ->, dotted] (c2) to[out=200, in=120] (c4);
   \draw[thick, darkgreen, ->, dotted] (c1) to[bend right] (c4);
  \end{tikzpicture}
 \end{center}
 \caption{An example of a part of the embedding of $\Lscr$.
 One-sided cycles are filled, two-sided cycles are drawn dashed.
 The cycle $C^*$ has exactly one non-crossing incidence to each of the five one-sided cycles; the neighbour pairs $(\{C^*, C_6\}, v)$ and $(\{C^*, C_6\}, w)$ are not homotopic and therefore yield two incidences:
 $(\{C^*, C_6\}, \{v\})$ is a $v$-incidence, while $(\{C^*, C_6\}, \{w\})$ is non-crossing.
 \newline
 The lower half of the picture corresponds to the side $S_1$ which contains no two-sided cycles;
 the arrows on $C^*$ indicate the orientation of $C^*_\rightarrow$.
 For each one-sided cycle $C_i$, $1 \leq i \leq 5$, the dotted arrow starting at $C_i$ represents the element $f_j(I) \in M_j^\prime$ that replaces the non-crossing incidence $I$ between $C_i$ and $C^*$ in $M_j$, where $S_j$ is the side containing $C_i$.
 For example, $f_1((\{C^*, C_4\}, \{v,w\})) = (\{C_3, C_4\}, \{w\})$ because $C_3$ is the ``first'' cycle in $S_2$ (w.r.t.\@ $C^*_\rightarrow$) touching $C_4$, while $f_2((\{C^*, C_2\}, \{v,w\})) = (\{C_4, C_2\}, \{v,w\})$ because $(\{C_5, C_2\}, \{w\})$ is crossing.
 \newline
 For proving that $C_6$ is $M$-good we need to consider consecutive incidences in $A$ w.r.t.\@ the ordering inherited by $C^*_\rightarrow$ with the same image under $g$, like $a_1=(\{C_6, C_5\},\{w\})$ and $a_2=(\{C_6, C_4\}, \{w\})$.
 However in this case $f_1((\{C^*,C_4\},\{v,w\}))$ must hit $C_6$.
 \label{fig:incidences}}
 \end{figure}
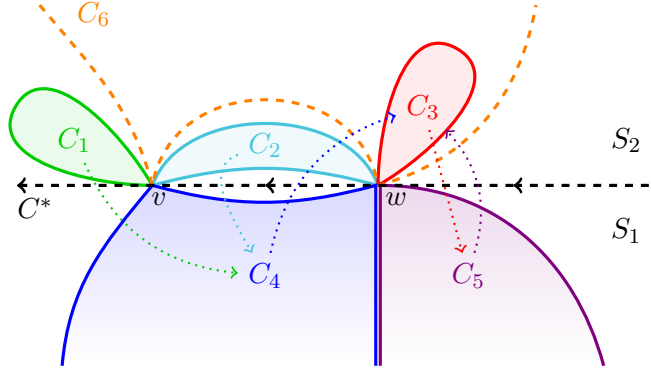
 
 \begin{lemma}\label{lemma:structure_lemma_using_incidences}
  Let $\Lscr$ be a laminar family of at least two cycles in a planar graph $G$, embedded in the sphere.
  Let $\Lscr_1$ be the set of one-sided cycles in $\Lscr$.
  Then there exists a good and structured set $M$ of incidences in $\Lscr$.
 \end{lemma}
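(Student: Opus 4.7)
I would prove the lemma by induction on $|\Lscr|$. The base case handles $\Lscr = \Lscr_1$ (no two-sided cycles), which is treated by the explicit auxiliary-graph construction sketched just before the lemma: form the planar graph $G^\prime$ on vertex set $\Lscr_1$ from the cyclic orders of $\Lscr_v$ at each $v \in V(\Lscr)$, identify parallel homotopic edges, and triangulate each face at a vertex $v$ meeting $k \geq 4$ cycles using $k-3$ extra diagonals. Take $M$ to consist of one chosen non-crossing incidence per edge of $G^\prime$ plus $k-3$ copies of some $v$-incidence at each such $v$. Euler's formula applied to the resulting triangulation gives $|M| \leq 3|\Lscr_1|-6$; both structured conditions are immediate from the construction, and goodness of one-sided cycles then follows from Lemma~\ref{lemma:structured_implies_good} (two-sided cycles do not occur in this case).

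For the inductive step I would pick a two-sided cycle $C^* \in \Lscr$ such that one side $S_1$ of $C^*$ contains no two-sided cycle of $\Lscr$ (existing by descent in the $\subseteq$-order on sides of two-sided cycles). For $j=1,2$ set $\Lscr^{(j)} := \{C^*\} \cup \{C \in \Lscr : C \text{ is inside } S_j\}$; both $|\Lscr^{(j)}| \geq 2$ since $C^*$ is two-sided in $\Lscr$. In $\Lscr^{(j)}$ the cycle $C^*$ is one-sided, so the one-sided cycles of $\Lscr^{(j)}$ are $\{C^*\}$ together with the one-sided cycles of $\Lscr$ inside $S_j$, giving $|\Lscr^{(1)}_1|+|\Lscr^{(2)}_1| = |\Lscr_1|+2$. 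The inductive hypothesis then yields structured good sets $M_1,M_2$ with $|M_1|+|M_2| \leq 3|\Lscr_1|-6$. To combine them, orient $C^*$ as $C^*_\rightarrow$ and, for each non-crossing incidence $I = (\{C^*, C_i\}, V(I)) \in M_j$ with $C_i$ one-sided inside $S_j$, define $f_j(I)$ to be the incidence $(\{C^\prime, C_i\}, V^\prime)$, where $C^\prime$ is the first cycle on the opposite side $S_{3-j}$ encountered when traversing along $C^*_\rightarrow$ from $V(I)$ that yields a non-crossing incidence with $C_i$, exactly as in the example from the caption of Figure~\ref{fig:incidences}. Writing $\mathcal{R}_j$ for the set of replaced incidences on side $j$, set $M := \bigcup_{j=1,2}((M_j \setminus \mathcal{R}_j) \cup f_j(\mathcal{R}_j))$, which satisfies $|M| \leq |M_1|+|M_2| \leq 3|\Lscr_1|-6$.

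The remaining task is to verify that $M$ is both structured and good in $\Lscr$. Structured condition~(1) holds because non-crossing incidences between one-sided cycles on the same side are inherited from $M_j$, while those between cycles on opposite sides (which can only occur at vertices of $C^*$) are produced exactly by $f_j$. Condition~(2) and goodness for cycles $C \in \Lscr \setminus \{C^*\}$ are verified by tracking the hits lost at each replacement; the delicate sub-case is when two incidences in $\Iscr^1_\Lscr(C)$ become consecutive along $C^*_\rightarrow$ and share the same $f_j$-image, which is resolved by noting that the intervening $f_j$-image must itself hit $C$, as indicated in the caption of Figure~\ref{fig:incidences}. The main obstacle, and the point where the design of $f_j$ is essential, is goodness for $C^*$ itself: since $C^*$ is two-sided in $\Lscr$ but one-sided in each $\Lscr^{(j)}$, we have $\Iscr^1_\Lscr(C^*) = \Iscr^1_{\Lscr^{(1)}}(C^*) \cup \Iscr^1_{\Lscr^{(2)}}(C^*)$, and the replacement rule strips out exactly the non-crossing $C^*$-incidences from each $M_j$; recovering enough hits on $C^*$ from the remaining crossing $C^*$-incidences on both sides requires a careful planarity argument around the cyclic order induced by $C^*_\rightarrow$, and this is the step I expect to be the most technically demanding.
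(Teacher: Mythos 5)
Your overall skeleton matches the paper's: induction on $|\Lscr|$, pick a two-sided $C^*$ whose side $S_1$ is free of other two-sided cycles, apply the hypothesis to the two sides with $C^*$ adjoined, and combine by replacing each non-crossing $C^*$-incidence $I \in M_j$ by some sub-incidence $f_j(I)$ chosen along $C^*_\rightarrow$. Your base-case treatment differs from the paper's in a permissible way: the paper only takes $|\Lscr_1|=2$ and $|\Lscr|=3$ as base cases and handles $\Lscr=\Lscr_1$, $|\Lscr|>3$ by inserting a fictitious two-sided cycle and re-entering the induction, while you propose to convert the auxiliary-graph construction of $G'$ directly into a structured incidence set; that is more work but not wrong, provided you verify structuredness (properties 1 and 2) in that case.

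The genuine gap is your definition of $f_j$. You define \emph{both} $f_1$ and $f_2$ to pick the first sub-incidence \emph{that is non-crossing}. The paper's definition is deliberately asymmetric: $f_1(I)$ is the first minimal sub-incidence in $\Iscr_I$ \emph{regardless} of crossing, whereas $f_2(I)$ is the first \emph{non-crossing} one, falling back to $f_1(I)$ if none exists (a fallback your definition omits entirely). This asymmetry is load-bearing on both sides of the argument. For structuredness property 1, you need at least one of $f_1(I_1), f_2(I_2)$ to equal $I$; the paper shows that if neither does, all four cycles meet at a point and $f_2(I_2)$ is then forced to be crossing, contradicting $f_2$'s preference. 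So \emph{at least one} $f_j$ must prefer non-crossing. But for goodness of a two-sided $C\subseteq S_2$ the paper walks along $C^*_\rightarrow$ and argues that between consecutive elements $a_1,a_2\in A$ with the same image under $g$, the incidence $f_1(I_2)$ of the $C^*$-incidence $I_2 \supseteq a_2$ must hit $C$ --- precisely because $f_1$ returns the \emph{very first} sub-incidence encountered. If $f_1$ were allowed to skip a crossing sub-incidence (as in your definition), it could jump past $C$ and the count $|A|-|B|$ fails to be recovered. So $f_1$ \emph{must not} prefer non-crossing. Making both $f_j$ non-crossing, as you do, sacrifices the goodness argument; making both crossing-agnostic would instead break structuredness. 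You also cite the caption of Figure~\ref{fig:incidences} as support, but that caption in fact exhibits the asymmetry ($f_1$ takes the first touching cycle, $f_2$ skips the crossing one). Finally, note the paper also takes the \emph{multiset} sum $M := M_1' + M_2'$ rather than a set union, which matters for the counting inequality.
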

 \begin{proof}
  We use an induction on $|\Lscr|$.
  We start with the case $|\Lscr_1| = 2$, which is trivial because all cycles are homotopic and there are no incidences.
  In particular, $M := \emptyset$ yields a good and structured set.
  
  Let us now assume $|\Lscr| = |\Lscr_1| = 3$.
  In this case there is at most one incidence between any two cycles, thus let $M$ be the set of all incidences, which is a good and structured set.
  
  Let now $|\Lscr| > 3$.
  We first assume $\Lscr \neq \Lscr_1$; we will show how to deal with the other case at the end.
  Let $C^* \in \Lscr$ be two-sided with a side $S_1$ that contains no other two-sided cycles.
  Let $S_2$ be the other side of $C^*$.
  For $i=1,2$ let $\Lscr_{S_i} \subseteq \Lscr$ be the set of all cycles with a side inside $S_i$.
  In particular, $C^* \in \Lscr_{S_1} \cap \Lscr_{S_2}$ is one-sided in both families.
  Since both families are strictly smaller than $\Lscr$, by induction hypothesis we can find good and structured sets $M_i$ for $\Lscr_{S_i}$ for $i=1,2$.
  
  Let $C^*_\rightarrow$ be an arbitrary (Eulerian) orientation of $C^*$.
  Assume $I \in \Iscr^1_{\Lscr_{S_i}}(C^*)$ to be a non-crossing incidence between $C^*$ and a cycle $N \in \Lscr_1 \cap \Lscr_{S_i}$ for some $i \in \{1, 2\}$.
  Let $\Iscr_I$ be the set of minimal sub-incidences of $I$ in $\Lscr$ between $N$ and cycles in $\Lscr_{S_{3-i}}$.
  The orientation of $C^*_\rightarrow$ induces a natural linear order on $\Iscr_I$.
  Let $f_1(I)$ be a first element in this order.
  Let $f_2(I)$ be a first non-crossing incidence in this order if it exists and $f_2(I) = f_1(I)$ otherwise (cf.\@ Figure~\ref{fig:incidences}).
  Let $M_i^\prime$ arise from $M_i$ by replacing each non-crossing incidence $I \in \Iscr_{\Lscr_{S_i}}(C^*)$ by $f_i(I)$.
  Since $V(f_i(I)) \subseteq V(I)$ holds for any $I$, $M_i^\prime$ is also good for $\Lscr_{S_i}$.

  Let $M := M_1^\prime + M_2^\prime$.
  We first show that $M$ is structured.
  To show property 1 of Definition~\ref{def:structured_incidence_set}, let $C_1, C_2 \in \Lscr_1$ and $I = (\{C_1, C_2\}, V(I))$ be a non-crossing incidence.
  If $C_1$ and $C_2$ are on the same side $S_i$ of $C^*$ then $I \in M_i \setminus \Iscr^1_{\Lscr_{S_i}}(C^*) \subseteq M$.
  Otherwise, w.l.o.g.\@ $C_i$ is inside $S_i$ for $i=1,2$.
  First assume that there is an $i \in \{1,2\}$ such that $C_i$ is the only one-sided cycle in $S_i$.
  Then w.l.o.g.\@ $S_{3-i}$ contains two one-sided cycles.
  Thus, there is a non-crossing sub-incidence $I_{3-i} \in \Iscr_{\Lscr_{S_{3-i}}}^1(C^*)$ of $I$.
  Also, $|\Iscr_{I_{3-i}}| = 1$ and therefore $f_{3-i}(I_{3-i}) = I \in M$.
  Now we assume that both $S_i$ contain at least two one-sided cycles.
  Again, there exist non-crossing sub-incidences $I_i \in \Iscr_{\Lscr_{S_i}}^1(C^*)$ of $I$ for each $i = 1,2$.
  Since $M_i$ is structured, $I_i \in M_i$.
  Let $C_1^\prime \in \Lscr_{S_1}$ and $C_2^\prime \in \Lscr_{S_2}$ such that $f_i(I_i)$ is an incidence between $C_i$ and $C_{3-i}^\prime$ for $i=1,2$.
  If neither $C_1 = C_1^\prime$ nor $C_2 = C_2^\prime$ holds then all four of those cycles must meet in one vertex due to minimality of $f_i(I_i)$ in the order on $\Iscr_{I_i}$.
  But then both $f_1(I_1)$ and $f_2(I_2)$ are crossing, contradicting the definition of $f_2$.
  Thus, we get $f_1(I_1) = I$ or $f_2(I_2) = I$.
  
  Next, we show property 2 of Definition~\ref{def:structured_incidence_set}.
  Let $C \in \Lscr_1 \cap \Lscr_{S_i}$ for some $i \in \{1,2\}$ and $v \in V(C)$.
  If all one-sided cycles that contain $v$ are in the same $\Lscr_{S_i}$ then $M_i$ already contains enough $v$-incidences.
  So we only have to consider the case $v \in V(C^*)$ such that both $\Lscr_{S_1} \cap \Lscr_1$ and $\Lscr_{S_2} \cap \Lscr_1$ contain a cycle that contains $v$.
  Let $\Nscr \subseteq \Lscr_1$ be the set of one-sided cycles with a $v$-incidence to $C$.
  
  Case 1: There is a $j \in \{1,2\}$ such that the cycles in $\Lscr_{S_j}$ that contain $v$ form a chain.
  In this case there is a natural correspondence between $v$-incidences between one-sided cycles in $\Lscr_{S_{3-j}}$ and $v$-incidences between one-sided cycles in $\Lscr$ (cf.\@ Figure~\ref{fig:m_is_structured}).
  Thus, $M_{3-j}$ already contains $|\Nscr|$ $v$-incidences.
  
  Case 2: For $j=1,2$ the cycles in $\Lscr_{S_j}$ that contain $v$ do not build a chain.
  Since $S_1$ contains no two-sided cycles except $C^*$, this implies that by definition of $f_1$ there is a non-crossing incidence $I$ in $M_1$ such that $f_1(I)$ is a $v$-incidence, as shown in Figure~\ref{fig:m_is_structured}.
  Also, $M_i$ contains at least $|\Nscr \cap \Lscr_{S_i}|$ $v$-incidences and $M_{3-i}$ contains at least $|\Nscr \cap \Lscr_{S_{3-i}}| - 2$ $v$-incidences because of property 2 of Definition~\ref{def:structured_incidence_set} for $C^*$ and $v$ in $\Lscr_{S_{3-i}}$.
  However, if the second lowerbound is tight then also the incidence between $C^*$ and $C$ in $\Lscr_{S_i}$ is crossing and $M_1$ contains even $|\Nscr \cap \Lscr_{S_i}| + 1$ $v$-incidences.
  So in total $M$ contains at least $|\Nscr|$ $v$-incidences, concluding the proof that $M$ is structured.
  
  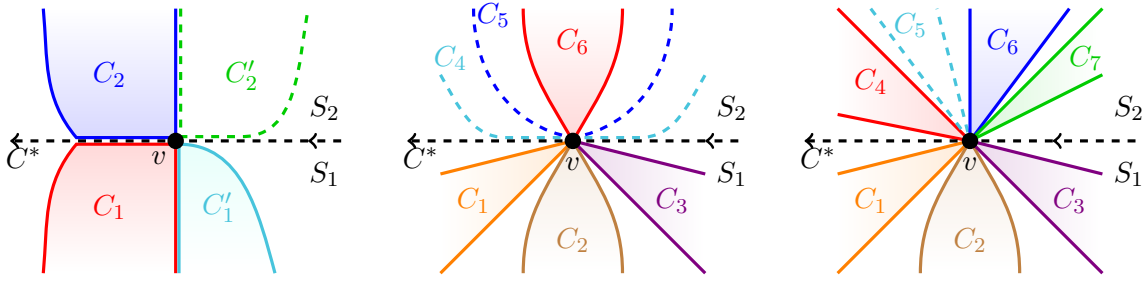
\begin{figure}[htb]
 \begin{center}
  \begin{tikzpicture}[scale=0.44, very thick]
   \fill[draw=white, top color=red!15, bottom color=white] (-4,-4) to[out=85,in=230] (-3,-0.1) -- (0,-0.1) -- (0,-4) -- (-4, -4);
   
   \fill[draw=white, top color=white, bottom color=blue!15] (-4,4) to[out=-85,in=-230] (-3,0.1) -- (0,0.1) -- (0,4) -- (-4,4);
   
   \fill[draw=white, top color=SkyBlue!15, bottom color=white] (3,-4) to[out=105,in=0] (0.1,-0.1) -- (0.1,-4) -- (3,-4);
   
   \draw[darkgreen, dashed] (0.15,4) -- (0.15,0.13) -- (2,0.13) to[out=0, in=-100] (4,4);
   \draw[red] (-4,-4) to[out=85,in=230] (-3,-0.1) -- (0,-0.1) -- (0,-4);
   \draw[blue] (-4,4) to[out=-85,in=-230] (-3,0.1) -- (0,0.1) -- (0,4);
   \draw[SkyBlue] (3,-4) to[out=105,in=0] (0.1,-0.1) -- (0.1,-4);
   \draw[very thick, dashed, ->] (5,0) -- (4,0);
   \draw[very thick, dashed, ->] (4,0) -- (-5,0);

   \node at (-0.5, -0.5) {$v$};
   \draw[black, fill] (0,0) circle (0.2);
   \node[thick, darkgreen] at (2,2) {$C_2^\prime$};
   \node[thick, red] at (-2,-2) {$C_1$};
   \node[thick, SkyBlue] at (1.3,-2) {$C_1^\prime$};
   \node[thick, blue] at (-2,2) {$C_2$};
   \node[thick] at (-4.6, -0.5) {$C^*$};
   \node[thick] at (4.5,-1) {$S_1$};
   \node[thick] at (4.5,1) {$S_2$};
   
   \begin{scope}[shift={(12,0)}]
   \fill[draw=white, top color=white, bottom color=red!15] (-1.5,4) to[out=-90,in=120] (0,0) to[out=60,in=-90] (1.5,4) -- (-1.5,4);
   \fill[draw=white, left color=white, right color=orange!15] (-4,-1) -- (0,0) -- (-4,-4) -- (-4,-1);
   \fill[draw=white, top color=brown!15, bottom color=white] (-1.5,-4) to[out=90,in=-120] (0,0) to[out=-60,in=90] (1.5,-4) -- (-1.5,-4);
   \fill[draw=white, left color=violet!15, right color=white] (4,-1) -- (0,0) -- (4,-4) -- (4,-1);
   
   \draw[dashed, color=SkyBlue] (-4,2) to[out=-60,in=180] (-2,0.1) -- (2,0.1) to[out=0,in=-120] (4,2);
   \draw[dashed, color=blue] (-3,4) to[out=-90,in=170] (0,0.1) to[out=10,in=-90] (3,4);
   \draw[color=red] (-1.5,4) to[out=-90,in=120] (0,0) to[out=60,in=-90] (1.5,4);
   \draw[color=orange] (-4,-1) -- (0,0) -- (-4,-4);
   \draw[color=brown] (-1.5,-4) to[out=90,in=-120] (0,0) to[out=-60,in=90] (1.5,-4);
   \draw[color=violet] (4,-1) -- (0,0) -- (4,-4);
   
    \draw[very thick, dashed, ->] (5,0) -- (4, 0);
    \draw[very thick, dashed, ->] (4,0) -- (-5,0);
    \node[thick] at (-4.6, -0.5) {$C^*$};
   \node[thick] at (4.8,-1) {$S_1$};
   \node[thick] at (4.8,1) {$S_2$};
   \node at (0, -0.7) {$v$};
   \draw[black, fill] (0,0) circle (0.2);
   \node[red] at (0,3) {$C_6$};
   \node[SkyBlue] at (-3.7, 2.5) {$C_4$};
   \node[blue] at (-2.4,3.8) {$C_5$};
   \node[orange] at (-3,-1.8) {$C_1$};
   \node[brown] at (0,-3) {$C_2$};
   \node[violet] at (3,-1.8) {$C_3$};
   \end{scope}
   
   \begin{scope}[shift={(24,0)}]
   \fill[draw=white, right color=red!15, left color=white] (-4,0.8) -- (0,0) -- (-4,4) -- (-4,0.8);
   \fill[draw=white, top color=white, bottom color=blue!15] (0,4) -- (0,0) -- (3,4) -- (0,4);
   \fill[draw=white, left color=darkgreen!15, right color=white] (4,4) -- (0,0) -- (4,2) -- (4,4);
   \fill[draw=white, left color=white, right color=orange!15] (-4,-1) -- (0,0) -- (-4,-4) -- (-4,-1);
   \fill[draw=white, top color=brown!15, bottom color=white] (-1.5,-4) to[out=90,in=-120] (0,0) to[out=-60,in=90] (1.5,-4) -- (-1.5,-4);
   \fill[draw=white, left color=violet!15, right color=white] (4,-1) -- (0,0) -- (4,-4) -- (4,-1);
   
   \draw[color=red] (-4,0.8) -- (0,0) -- (-4,4);
   \draw[dashed, color=SkyBlue] (-3,4) --(0,0) -- (-1,4);
   \draw[color=blue] (0,4) -- (0,0) -- (3,4);
   \draw[color=darkgreen] (4,4) -- (0,0) -- (4,2);
   \draw[color=orange] (-4,-1) -- (0,0) -- (-4,-4);
   \draw[color=brown] (-1.5,-4) to[out=90,in=-120] (0,0) to[out=-60,in=90] (1.5,-4);
   \draw[color=violet] (4,-1) -- (0,0) -- (4,-4);
   
    \draw[very thick, dashed, ->] (5,0) -- (2.5, 0);
    \draw[very thick, dashed, ->] (2.5,0) -- (-5,0);
    \node[thick] at (-4.6, -0.5) {$C^*$};
   \node[thick] at (4.8,-1) {$S_1$};
   \node[thick] at (4.8,1) {$S_2$};
   \node at (0, -0.7) {$v$};
   \draw[black, fill] (0,0) circle (0.2);
   \node[red] at (-3, 1.8) {$C_4$};
   \node[SkyBlue] at (-1.8, 3.5) {$C_5$};
   \node[blue] at (1,3) {$C_6$};
   \node[darkgreen] at (3.5,2.5) {$C_7$};
   \node[orange] at (-3,-1.8) {$C_1$};
   \node[brown] at (0,-3) {$C_2$};
   \node[violet] at (3,-1.8) {$C_3$};
   \end{scope}
  \end{tikzpicture}
 \end{center}
 \caption{Three examples for the proof that $M$ is structured:
 The left image shows a situation with a non-crossing incidence $I$ between $C_1$ and $C_2$.
 If for each $i=1,2$ the incidence between $C_i$ and $C^*$ is mapped by $f_i$ to an incidence between $C_i$ and $C_{3-i}^\prime \neq C_{3-i}$ then each $C_i^\prime$ must come ``before'' $C_i$ w.r.t.\@ $C^*_\rightarrow$.
 So all cycles meet in a point $v$, contradicting the choice of $f_2$.
 \newline
 The other two images show the different situations for property 2:
 The middle image shows case 1 where all cycles on one side (in this case $S_2$) form a chain.
 Then it is easy to see that $M_1$ already contains enough $v$-incidences.
 \newline
 The right image shows the remaining case 2.
 Let us assume that $C$ is inside $S_2$.
 $M_2$ already contains $|\Nscr \cap \Lscr_{S_2}|$ $v$-incidences and $M_1$ contains at least $|\Nscr \cap \Lscr_{S_1}| - 2$ $v$-incidences because $C_1$ and $C_3$ are the only cycles that might be in $\Nscr$ without a $v$-incidence to $C^*$.
 However, if both $C_1$ and $C_3$ are in $\Nscr$ then $M_2$ contains an additional $v$-incidence because $(\{C^*, C\}, \{v\})$ must be crossing.
 \label{fig:m_is_structured}}
 \end{figure}
  
  It is left to show that $M$ is good.
  First, we have
  \begin{equation*}
   |M| \leq 3 (|\Lscr_1 \cap \Lscr_{S_1}| + 1) - 6 + 3 (|\Lscr_1 \cap \Lscr_{S_2}| + 1) - 6 \leq 3 |\Lscr_1| - 6
  \end{equation*}
  Also all one-sided cycles are $M$-good since $M$ is structured.
  Let now $C \in \Lscr$ be two-sided.
  By choice of $C^*$, $C \in \Lscr_{S_2}$.
  Let $A$ be the set of incidences between $C$ and one-sided cycles inside $S_1$ in $\Lscr$, and let $B$ be the set of incidences between $C$ and $C^*$ in the family $\Lscr_{S_2}$.
  Note that $B=\emptyset$ if $C=C^*$ or $C$ is homotopic to $C^*$.
  By definition we get $|\Iscr_\Lscr(C)| - |\Iscr_{\Lscr_{S_2}}(C)| = |A| - |B|$.
  Also, $A$ is naturally ordered cyclically by $C^*_\rightarrow$.
  
  We first assume $B \neq \emptyset$.
  Then there is a natural map $g \colon A \to B$ such that each $a \in A$ is a sub-incidence of $g(a)$;
  furthermore, for each $b \in B$ the cyclic order on $A$ induces a linear order on $g^{-1}(\{b\})$.
  Let now $b \in B$ and $a_1, a_2 \in A$ be consecutive elements in the linear order on $g^{-1}(\{b\})$.
  Then there is an incidence $I_2 \in \Iscr_{\Lscr_{S_1}}(C^*)$ such that $a_2$ is a sub-incidence of $I_2$.
  Then by the definition of $f_1$ we have that $f_1(I_2)$ hits $C$ (cf.\@ Figure~\ref{fig:incidences}).
  
  In the other case $B = \emptyset$ one can see similarly that for any consecutive elements $a_1, a_2 \in A$ in the cyclic order on $A$ there is an incidence $I_2 \in \Iscr_{\Lscr_{S_1}}(C^*)$ that $a_2$ is a sub-incidence of such that $f_1(I_2)$ hits $C$.
  Thus, in both cases the number of incidences in $M_1^\prime$ that hit $C$ is at least $|A| - |B|$ and
  \begin{align*}
   |\{I \in M : V(I) \subseteq V(C)\}| \geq & \ |\{I \in M_1^\prime : V(I) \subseteq V(C)\}| + |\{I \in M_2^\prime : V(I) \subseteq V(C)\}| \\
   \geq & \ |\{I \in M_1^\prime : V(I) \subseteq V(C)\}| + |\Iscr_{\Lscr_{S_2}}(C)| \\
   = & \ |\{I \in M_1^\prime : V(I) \subseteq V(C)\}| + |\Iscr_\Lscr(C)| - (|A| - |B|) \\
   \geq & \ |\Iscr_\Lscr(C)|
  \end{align*}
  
  We are left with the case that $|\Lscr| > 3$, but $\Lscr = \Lscr_1$.
  In this case, however, we can just add a new two-sided cycle $C^*$ with at least two one-sided sides on each side both to $G$ and to $\Lscr$.
  This only strengthens the statement we prove because any good and structured set for the constructed instance is also good and structured for the original instance.
  Also, in the induction step we can still apply the induction hypothesis to $\Lscr_{S_1}$ and $\Lscr_{S_2}$ because both $S_1$ and $S_2$ contain at least two one-sided cycles.
 \end{proof}
 
 As a direct consequence of this lemma we get the Structure Lemma~\ref{lemma:structure_lemma}:
 
 \begin{replemma}{lemma:structure_lemma}
  Let $\Lscr$ be a laminar family of cycles in a planar graph $G$, embedded in the sphere, such that $\Lscr$ contains no redundant cycles.
  Let $\Lscr_1$ be the set of one-sided cycles in $\Lscr$.
  Then there is a multi-subset $M^* \subseteq V(G)$ with $|M^*| \leq 3 |\Lscr_1|$ such that for any $C \in \Lscr$ we have $|M^* \cap V(C)| \geq |\Nscr_\Lscr^1(C) \setminus \{C\}|$.
 \end{replemma}
 \begin{proof}
  W.l.o.g.\@ $|\Lscr| \geq 2$.
  By Lemma~\ref{lemma:structure_lemma_using_incidences}
  let $M$ be a good set of incidences in $\Lscr$.
  Let $M^*$ arise from adding one element of $V(I)$ for every $I \in M$.
  In particular, $|M^*| = |M| \leq 3|\Lscr_1| - 6$.
  
  Let $C \in \Lscr$.
  Since no side of $C$ is redundant, it is not homotopic to any cycle in $\Lscr_1$.
  Thus,
  \begin{equation*} 
   |\Nscr_\Lscr^1(C) \setminus \{C\}| \leq |\Iscr_\Lscr^1(C)| \leq |\{I \in M : V(I) \subseteq V(C)\}| \leq |M^* \cap V(C)|
  \end{equation*}
 \end{proof}
 
 \section{Improving the bounds below 3.5}\label{sec:improving_the_bounds}
 
 In this section we improve the bound on the integrality gaps of the LPs (\ref{eq:lp}) and (\ref{eq:lp_edgedisjoint}) to $\frac{20 + \sqrt{130}}{9} < 3.5$.
 We first only consider the vertex-disjoint cycle packing LP (\ref{eq:lp});
 an extension to the edge-disjoint case is given in Section~\ref{sec:edge_disjoint}.
 
 \subsection{The vertex-disjoint version}
 We still use Algorithm~\ref{alg:main_alg} from Section~\ref{sec:bounding_the_gap} for the improved approximation guarantee, but add further possibilities for the set $\Fscr^*$ of cycles that are added to our solution during a single iteration.
 Note that the one-cardinality set chosen in Theorem~\ref{thm:ub_4} already gives a good approximation guarantee if the average LP value on one-sided cycles is small.
 On the other hand, if the average LP value on one-sided cycles is large then we will find a large set of pairwise vertex-disjoint cycles with relatively small neighbourhood which we can take as $\Fscr^*$.
 For analyzing the case of $\Fscr^*$ containing more than one cycle we use the following Lemma:
 
 \begin{lemma}\label{lemma:neighbourhood_of_large_set}
  Let $\Lscr$ be a laminar family of cycles in a planar graph $G$, embedded in the sphere.
  Let $\Lscr_1$ be the set of one-sided cycles in $\Lscr$.
  Let $\Fscr \subseteq \Lscr_1$.
  Then there is a set $M \subseteq V(\Fscr)$ with $|M| \leq |\Fscr| + |\Lscr_1|$ such that each cycle in $\Lscr$ is either vertex-disjoint to all cycles in $\Fscr$ or contains a vertex from $M$.
 \end{lemma}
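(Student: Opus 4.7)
The plan is to build $M$ by assigning at most one vertex of $V(\Fscr)$ to each one-sided cycle in $\Lscr_1$ and one additional ``anchor'' vertex to each $F\in\Fscr$. Concretely, I would define
\begin{equation*}
  M \;=\; \{u_F : F \in \Fscr\}\;\cup\;\{v_N : N \in \Lscr_1,\ V(N) \cap V(\Fscr) \neq \emptyset\},
\end{equation*}
where $u_F \in V(F)$ is an anchor of $F$ and $v_N \in V(N)\cap V(\Fscr)$ is a representative of $N$. Since $\Fscr \subseteq \Lscr_1$, this yields $|M| \leq |\Fscr| + |\Lscr_1|$ immediately. The one-sided cycles of $\Lscr$ touching $\Fscr$ are hit by construction (via the $v_N$'s), and every $F \in \Fscr$ is hit by its anchor $u_F$. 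Thus the entire difficulty lies in choosing the $u_F$'s and $v_N$'s so that every two-sided cycle of $\Lscr$ touching $\Fscr$ is also hit.

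For the two-sided coverage I would argue by induction on $|\Lscr|$, following exactly the template of the proof of Lemma~\ref{lemma:structure_lemma_using_incidences}. In the base case $\Lscr = \Lscr_1$ there is nothing further to do. Otherwise, pick a two-sided cycle $C^*\in\Lscr$ with a side $S_1$ that contains no other two-sided cycles, and let $\Lscr_{S_1},\Lscr_{S_2}$ be the two sub-families obtained by splitting at $C^*$ (in each of which $C^*$ becomes one-sided). Apply the induction hypothesis on each side with $\Fscr_i = (\Fscr\cap \Lscr_{S_i})\cup \{C^*\}$ if $C^*\notin \Fscr$, and with $\Fscr_i = \Fscr\cap \Lscr_{S_i}$ otherwise, obtaining sub-solutions $M_1, M_2$. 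The combined set $M = M_1 \cup M_2$ is then refined as in Lemma~\ref{lemma:structure_lemma_using_incidences}: whenever a representative in $M_i$ corresponds to an incidence with $C^*$, I would re-route it along $C^*$ using the analogues of the maps $f_1, f_2$ from that proof, so that it hits a cycle in $\Lscr\setminus\{C^*\}$ that touches $C^*$ from the other side.

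Two things need to be checked in the combination. First, the size bound: since $C^*$ is one-sided in both sub-families but two-sided in $\Lscr$, it is double-counted in $|\Lscr_1\cap \Lscr_{S_1}| + |\Lscr_1\cap \Lscr_{S_2}|$, and analogously for the $\Fscr$-anchor on $C^*$ when $C^*$ was added to both $\Fscr_i$'s. The double counts exactly pay for the single ``extra'' representative/anchor that arises when we merge the two sides, yielding $|M| \leq |\Fscr| + |\Lscr_1|$. Second, coverage: every two-sided cycle $N\neq C^*$ touching $\Fscr$ in $\Lscr$ either lies entirely on one side of $C^*$ (in which case it was already hit by $M_i$) or straddles $C^*$. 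In the latter case, $N$ touches $C^*$, so the re-routing step (exactly the argument used at the end of the proof of Lemma~\ref{lemma:structure_lemma_using_incidences} showing goodness of the combined incidence set) guarantees that some re-routed vertex from $M_1^\prime$ hits $N$.

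The main obstacle will be the bookkeeping in the combination step. In particular, one has to verify that whenever $C^*$ is added to $\Fscr_i$ purely to serve as an ``anchor on the boundary,'' its contribution to $|\Fscr_i|$ can be absorbed by the shared budget between the two sides, and that re-routed representatives along $C^*$ still hit every crossing two-sided cycle. I expect this to follow by the same case analysis (``chain'' case versus ``non-chain'' case around a common vertex of $C^*$) that appears in the proof of Lemma~\ref{lemma:structure_lemma_using_incidences}, now specialized to vertex hits of cycles touching $\Fscr$ rather than to incidence counts.
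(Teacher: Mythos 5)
The paper's own proof does not follow the template of Lemma~\ref{lemma:structure_lemma_using_incidences} at all, and your plan to port that induction is the wrong tool here --- the actual argument is much more elementary. The paper fixes a reference point $p_\infty$, splits the cycles touching $\Fscr$ into $\Bscr_\text{int}$ (cycles that contain and touch some $F\in\Fscr$) and $\Bscr_\text{ext}$ (the rest), and handles them separately. For $\Bscr_\text{int}$ it observes that for each fixed $F\in\Fscr$ the cycles containing and touching $F$ form a chain, hence all pass through a single common vertex $v_F\in V(F)$; this gives $|\Fscr|$ vertices hitting all of $\Bscr_\text{int}$. For $\Bscr_\text{ext}$ it uses a short peeling induction on $|\Lscr_1|$: pick an $\inside$-minimal $C_1\in\Bscr_\text{ext}$, add a vertex $v\in V(C_1)\cap V(C_2)$ for some $C_2\in\Fscr$, and delete everything inside $C_1$ (which contains at least one one-sided cycle) together with all non-$\Fscr$ cycles through $v$. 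No incidence machinery, no splitting at a two-sided $C^*$, no re-routing maps $f_1,f_2$.

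Your proposal has two concrete gaps. First, the set you define is too small to work: you place at most one vertex per one-sided cycle \emph{that touches $\Fscr$}, but the paper's $M_\text{ext}$ spends vertices that are charged to one-sided cycles \emph{not} touching $\Fscr$ (those sitting inside the minimal $C_1$ being peeled). For example, a single $F$ can be touched from the outside by many two-sided cycles $N_1,\dots,N_k$ at $k$ distinct vertices; your $\{u_F\}\cup\{v_N : N\in\Lscr_1,\,V(N)\cap V(\Fscr)\neq\emptyset\}$ then has size $O(1)$, far short of what is needed, even though the budget $|\Fscr|+|\Lscr_1|$ would be ample because of one-sided cycles hidden inside each $N_i$. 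Second, the size bookkeeping in your combination step does not close: with $C^*$ one-sided in both $\Lscr_{S_1}$ and $\Lscr_{S_2}$ and added (or present) in both $\Fscr_i$, you get $|M_1|+|M_2|\leq |\Fscr|+|\Lscr_1|+3$ or $+4$, and ``the double counts exactly pay for the single extra representative'' saves at most $1$, not $3$ or $4$. You would need a substantially more careful merging argument, which is precisely what the paper avoids by not taking this route.
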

 \begin{proof}
  W.l.o.g.\@ $|\Lscr| > 1$.
  We can also assume that there exists a point $p_\infty$ on the sphere that lies neither on the embedding of vertices or edges in $G$ nor in any one-sided side of a cycle in $\Lscr_1$ (if this is not the case we can replace an arbitrary edge in $G$ by two parallel edges, which does not affect the lemma's statement).
  For this proof, we call the side of a cycle $C \in \Lscr$ that does not contain $p_\infty$ the interior of $C$ and say $C$ contains a cycle $C^\prime \in \Lscr$, or $C^\prime \inside C$, if the interior of $C^\prime$ is contained in the interior of $C$.
  
  Let $\Bscr_\text{int} \subseteq \Lscr$ be the set of cycles $C$ such that there is a cycle $C^\prime \in \Fscr$ with $C^\prime \subseteq C$ and $V(C) \cap V(C^\prime) \neq \emptyset$.
  In particular, $\Fscr \subseteq \Bscr_\text{int}$.
  Let $f \colon \Bscr_\text{int} \to \Fscr$ such that each $C \in \Bscr_\text{int}$ contains $f(C)$ and shares a vertex with $f(C)$.
  Then for any $C \in \Fscr$ all cycles in $f^{-1}(C)$ must build a chain and therefore meet in some vertex $v_C \in V(C)$.
  Thus, $M_\text{int} := \{v_C : C \in \Fscr\}$ hits all cycles in $\Bscr_\text{int}$.
  
  Let now $\Bscr_\text{ext} \subseteq \Lscr \setminus \Bscr_\text{int}$ be the set of all cycles in $\Lscr \setminus \Bscr_\text{int}$ that share a vertex with any cycle in $\Fscr$.
  We show by induction on $|\Lscr_1|$ that we can hit all cycles in $\Bscr_\text{ext}$ with some $M_\text{ext} \subseteq V(\Fscr)$ with $|M_\text{ext}| \leq |\Lscr_1|$:
  For $|\Lscr_1| = 1$ this is trivial.
  Otherwise, let $C_1 \in \Bscr_\text{ext}$ be minimal w.r.t.\@ $\inside$ and $C_2 \in \Fscr$ with some vertex $v \in V(C_1) \cap V(C_2)$ (cf.\@ Figure~\ref{fig:easy_structure_lemma}).
  Construct another laminar family $\Lscr^\prime$ by deleting all cycles inside $C_1$ and all cycles in $\Lscr \setminus \Fscr$ that contain $v$.
  Since $C_1$ contains some one-sided cycle, $\Lscr^\prime$ contains strictly less one-sided cycles and we can use the induction hypothesis on $\Lscr^\prime$.
  Also the deletion of cycles inside $C_1$ does (except for $C_1$ itself) not change $\Bscr_\text{ext}$ because $C_1 \notin \Bscr_\text{int}$ and $C_1$ was minimal.
  Thus, the induction hypothesis gives us a set $M^\prime_\text{ext} \subseteq V(\Fscr)$ that hits all cycles in $\Bscr_\text{ext} \cap \Lscr^\prime$ with $|M^\prime_\text{ext}| \leq |\Lscr_1| - 1$,
  so $M_\text{ext} := M^\prime_\text{ext} \cup \{v\}$ has the desired properties.
  
  Setting $M := M_\text{int} \cup M_\text{ext}$ yields a set as desired in the Lemma.
 \end{proof}

  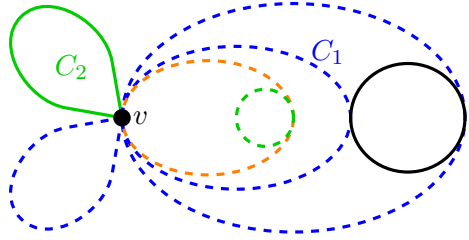
\begin{figure}[htb]
 \begin{center}
  \begin{tikzpicture}[scale=0.38, very thick]
   \draw[color=darkgreen] (0,0) -- (100:2) to[out=100,in=45] (135:5) to[out=225, in=170] (170:2) -- (0,0);
   \draw[color=blue, dashed] (0,0) -- (260:2) to[out=260,in=315] (225:5) to[out=135,in=190] (190:2) -- (0,0);
   \draw[color=blue, dashed] (6,0) ellipse (6 and 4);
   \draw[color=blue, dashed] (4,0) ellipse (4 and 2.5);
   \draw[color=orange, dashed] (3,0) ellipse (3 and 2);
   \draw[color=darkgreen, dashed] (5,0) ellipse (1 and 1);
   \draw[color=black] (10,0) ellipse (2 and 1.9);
   \draw[color=black, fill] (0,0) circle (0.25);
   \node[anchor=west] at (0,0) {$v$};
   \node[color=darkgreen] at (135:2.5) {$C_2$};
   \node[color=blue] at (7.2, 2.3) {$C_1$};
  \end{tikzpicture}
 \end{center}
 \caption{The cycles in $\Fscr$ are drawn in \textcolor{darkgreen}{green}, the cycles in $\Bscr_\text{ext}$ in \textcolor{blue}{blue}.
 Note that the orange cycle is not in $\Bscr_\text{ext}$ because it is in $\Bscr_\text{int}$.
 The inside of $C_1$ contains no other cycles in $\Bscr_\text{ext}$ and $C_1$ meets $C_2 \in \Fscr$ in $v$.
 We then add $v$ to our set $M_\text{ext}$ and recurse on the laminar family $\Lscr^\prime$ that is constructed by removing all cycles inside $C_1$ and all cycles that contain $v$ and are not in $\Fscr$.
 These cycles are drawn dashed.
 This step decreases the number of one-sided cycles.
 \label{fig:easy_structure_lemma}}
\end{figure}
 
 In the following, we assume $\Cscr$ to be an uncrossable family of cycles in a graph $G$, embedded in the sphere.
 We further assume $x \in \mathbb{R}^\Cscr$ to be a structured solution to the LP (\ref{eq:lp}) with support $\Lscr_x$.
 As in Theorem~\ref{thm:ub_4} we can assume $E(\Lscr_x)$ to be connected.
 Let $\Lscr_1 \subseteq \Lscr_x$ be the set of one-sided cycles in $\Lscr_x$.
 For each $0 \leq \alpha < 1$ we define $\Lalpha \subseteq \Lscr_1$ to be the set of one-sided cycles with LP value $>\alpha$ and set $r_\alpha := \frac{|\Lalpha|}{|\Lscr_1|}$.
 
 We will now give three possible choices for $\Fscr^*$ in Algorithm~\ref{alg:main_alg}.
 The first possibility is to choose a one-cardinality subset of $\Lscr_1$ as $\Fscr^*$, as in Theorem~\ref{thm:ub_4}.
 By Lemma~\ref{lemma:x_is_small_on_average} we directly get:
 
 \begin{lemma}\label{lemma:poss_1}
  There exists a cycle $C^* \in \Lscr_1$ with
  $x(\Nscr_{\Lscr_x}(C^*)) \leq 3 + \frac{x(\Lscr_1)}{|\Lscr_1|}$.
 \end{lemma}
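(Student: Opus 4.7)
The plan is to derive this directly from the Structure Lemma in the averaged form already packaged as Lemma~\ref{lemma:x_is_small_on_average}, using nothing more than an averaging argument over $\Lscr_1$.

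First I would recall that since $x$ is structured and $E(\Lscr_x)$ is assumed connected, Lemma~\ref{lemma:x_is_small_on_average} applies to $\Lscr_x$ and gives
\begin{equation*}
 \sum_{C \in \Lscr_1} x(\Nscr_{\Lscr_x}(C) \setminus \{C\}) \;\leq\; 3|\Lscr_1|.
\end{equation*}
The quantity of interest is $x(\Nscr_{\Lscr_x}(C))$, which differs from $x(\Nscr_{\Lscr_x}(C) \setminus \{C\})$ exactly by $x(C)$. Summing this correction over $C \in \Lscr_1$ produces the term $\sum_{C \in \Lscr_1} x(C) = x(\Lscr_1)$, yielding
\begin{equation*}
 \sum_{C \in \Lscr_1} x(\Nscr_{\Lscr_x}(C)) \;\leq\; 3|\Lscr_1| + x(\Lscr_1).
\end{equation*}

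Next, dividing by $|\Lscr_1|$ shows that the average of $x(\Nscr_{\Lscr_x}(C))$ over $C \in \Lscr_1$ is at most $3 + \frac{x(\Lscr_1)}{|\Lscr_1|}$, so by an averaging (pigeonhole) argument there exists at least one cycle $C^* \in \Lscr_1$ attaining this bound, which is precisely the claim. There is essentially no obstacle: the work has all been done in Lemma~\ref{lemma:x_is_small_on_average}, and this lemma is just the observation that picking the best single one-sided cycle from the support gives the averaged guarantee, with the extra $\frac{x(\Lscr_1)}{|\Lscr_1|}$ term accounting for $C^*$ itself being in its own neighbourhood. The only mild subtlety worth flagging explicitly is that the reduction to the case $E(\Lscr_x)$ connected is legitimate here for the same reason as in Theorem~\ref{thm:ub_4}: the algorithm may be analysed on each connected component of $\Lscr_x$ separately, and the averaged bound on each component combines without loss.
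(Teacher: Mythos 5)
Your proof is correct and matches the paper's reasoning exactly: the paper also derives this lemma directly from Lemma~\ref{lemma:x_is_small_on_average} by adding the $x(C)$ terms and averaging over $\Lscr_1$, precisely as in the proof of Theorem~\ref{thm:ub_4}. Nothing further is needed.
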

 
 As a second possibility we define $\Fscr^*_\alpha := \Lalpha$ for any $\alpha \geq \frac{1}{2}$.
 Note that the cycles in $\Fscr^*_\alpha$ are pairwise vertex-disjoint because $x$ is a feasible LP solution.
 
 \begin{lemma}\label{lemma:poss_2}
  For any $\alpha \geq \frac{1}{2}$ we have
  \begin{equation*}\frac{x\left(\bigcup_{C^\prime \in \Fscr^*_\alpha}\Nscr_{\Lscr_x}(C^\prime)\right)}{|\Fscr^*_\alpha|} \leq 1 + \frac{1-\alpha}{r_\alpha}
  \end{equation*}
 \end{lemma}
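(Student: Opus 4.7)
The plan is to prove the equivalent inequality $x(N) \leq |\Fscr^*_\alpha| + (1-\alpha)|\Lscr_1|$, where $N := \bigcup_{C^\prime \in \Fscr^*_\alpha} \Nscr_{\Lscr_x}(C^\prime)$. I would apply Lemma~\ref{lemma:neighbourhood_of_large_set} with $\Fscr := \Fscr^*_\alpha$, but crucially use the finer information from its proof rather than just the final bound $|M| \leq |\Fscr^*_\alpha| + |\Lscr_1|$. Specifically, that proof exhibits a partition $N = \Bscr_{\text{int}} \cup \Bscr_{\text{ext}}$ with $\Fscr^*_\alpha \subseteq \Bscr_{\text{int}}$, and a decomposition $M = M_{\text{int}} \cup M_{\text{ext}}$, where $M_{\text{int}} = \{v_{C^*} : C^* \in \Fscr^*_\alpha\}$ with $v_{C^*} \in V(C^*)$ hits every cycle of $\Bscr_{\text{int}}$, and $M_{\text{ext}} \subseteq V(\Fscr^*_\alpha)$ with $|M_{\text{ext}}| \leq |\Lscr_1|$ hits every cycle of $\Bscr_{\text{ext}}$. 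Moreover, all cycles $C \in \Bscr_{\text{int}}$ with $f(C) = C^*$ pass through the single vertex $v_{C^*}$.

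For $x(\Bscr_{\text{int}})$, since all cycles in $f^{-1}(C^*)$ share $v_{C^*}$, LP feasibility at $v_{C^*}$ gives $\sum_{C \in f^{-1}(C^*)} x(C) \leq 1$, and summing over $C^* \in \Fscr^*_\alpha$ yields $x(\Bscr_{\text{int}}) \leq |\Fscr^*_\alpha|$. For $x(\Bscr_{\text{ext}})$, I assign each $C \in \Bscr_{\text{ext}}$ to a single vertex $v(C) \in V(C) \cap M_{\text{ext}}$. Any $v \in M_{\text{ext}}$ lies on a unique $C'_v \in \Fscr^*_\alpha$ (uniqueness follows from the pairwise vertex-disjointness of $\Fscr^*_\alpha$, which holds because $\alpha \geq 1/2$), and $x(C'_v) > \alpha$, so the LP constraint at $v$ gives $\sum_{C \neq C'_v : v \in V(C)} x(C) < 1 - \alpha$. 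Since $\Bscr_{\text{ext}} \cap \Fscr^*_\alpha = \emptyset$, any $C$ assigned to $v$ is distinct from $C'_v$, hence $\sum_{C : v(C) = v} x(C) < 1 - \alpha$. Summing over $M_{\text{ext}}$ gives $x(\Bscr_{\text{ext}}) \leq (1-\alpha)|\Lscr_1|$, and combining yields $x(N) \leq |\Fscr^*_\alpha| + (1-\alpha)|\Lscr_1|$. Dividing by $|\Fscr^*_\alpha|$ gives the claim.

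The main subtlety is that using Lemma~\ref{lemma:neighbourhood_of_large_set} as a black box, i.e.\@ using $|M| \leq |\Fscr^*_\alpha| + |\Lscr_1|$ together with a uniform slack $1 - \alpha$ per vertex, is too weak by an additive $(1-\alpha)|\Fscr^*_\alpha|$. The improvement comes from distinguishing the two regimes: for cycles in $\Bscr_{\text{int}}$ (which includes $\Fscr^*_\alpha$ itself) we exploit the full unit capacity at each of the $|\Fscr^*_\alpha|$ central vertices $v_{C^*}$, while for cycles in $\Bscr_{\text{ext}}$ (which excludes $\Fscr^*_\alpha$) we use the slack $1 - x(C'_v) < 1 - \alpha$ left by the large $\Fscr^*_\alpha$-cycle through that vertex. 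So the essential point is to peer inside the construction of $M$ rather than treat the previous lemma as a black box.
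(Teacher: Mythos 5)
Your proof is correct and reconstructs the paper's argument exactly: both split the neighbourhood into $\Bscr_{\text{int}}$ (hit by the $|\Fscr^*_\alpha|$ vertices of $M_{\text{int}}$, each carrying full unit capacity) and $\Bscr_{\text{ext}}$ (hit by the at most $|\Lscr_1|$ vertices of $M_{\text{ext}}$, each with slack less than $1-\alpha$ left by the $\Fscr^*_\alpha$-cycle through it). You are also right that Lemma~\ref{lemma:neighbourhood_of_large_set} cannot be applied as a pure black box here --- the paper's displayed inequality chain is somewhat terse on this and the justification is deferred to the closing sentence of its proof, so your explicit unpacking of the $M_{\text{int}}$/$M_{\text{ext}}$ decomposition and the resulting $(1-\alpha)|\Fscr^*_\alpha|$ saving is a faithful and clearer rendering of the same argument.
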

 \begin{proof}
  Let $\alpha \geq \frac{1}{2}$.
  By Lemma~\ref{lemma:neighbourhood_of_large_set} there is a set $M \subseteq V(\Fscr^*_\alpha)$ with $|M| \leq |\Fscr^*_\alpha| + |\Lscr_1|$ such that each cycle in $\bigcup_{C^\prime \in \Fscr^*_\alpha}\Nscr_{\Lscr_x}(C^\prime)$ contains a vertex of $M$.
  Now
  \begin{align*}
   & \ x\left(\bigcup_{C^\prime \in \Fscr^*_\alpha}\Nscr_{\Lscr_x}(C^\prime)\right) \\
   \leq & \ \sum_{v \in M} x(\{C \in \Lscr_x \setminus \Fscr^*_\alpha : v \in V(C)\}) + x(\Fscr^*_\alpha) \\
   \leq & \ |\Fscr^*_\alpha| + (1-\alpha)|\Lscr_1|
  \end{align*}
  holds, where the last inequality follows from the fact that there are $|\Fscr^*_\alpha|$ vertices in $M$ covering $\Fscr^*_\alpha$, and on the other vertices we only have to count the LP value of cycles not in $\Fscr^*_\alpha$.
 \end{proof}
 
 As a third possibility we take a look at the sets $\Lalpha$ with $\frac{1}{4} \leq \alpha < \frac{1}{2}$.
 For these we know that at most three cycles in $\Lalpha$ can share a vertex.
 Let $G^\prime$ be the \emph{conflict graph} for the cycles in $\Lalpha$; i.e.\@ $G^\prime$ is the graph on vertex set $\Lalpha$ such that two cycles in $\Lalpha$ are connected by an edge in $G^\prime$ if and only if they share a vertex in $G$.
 Since each vertex is contained in at most three cycles of $\Lalpha$, $G^\prime$ is planar.
 Furthermore, the cycles in $\Lscr_1^{>1-\alpha} \subseteq \Lalpha$ correspond to isolated vertices in $G^\prime$.
 By the Four Colour Theorem~\cite{AppH76} we can partition $V(G^\prime) - \Lscr_1^{>1-\alpha}$ into four stable sets.
 The largest of those, together with 
 $\Lscr_1^{>1-\alpha}$, yields a stable set in $G^\prime$ of size at least $|\Lscr_1^{>1-\alpha}| + \frac{1}{4}(|\Lalpha|- |\Lscr_1^{>1-\alpha}|)$.
 We let $\Fscr^*_\alpha$ be the set of cycles in $\Lalpha$ corresponding to such a stable set in $G^\prime$.
 
 \begin{lemma}\label{lemma:poss_3}
  For any $\frac{1}{4} \leq \alpha < \frac{1}{2}$ we have
  \begin{equation*}\frac{x\left(\bigcup_{C^\prime \in \Fscr^*_\alpha}\Nscr_{\Lscr_x}(C^\prime)\right)}{|\Fscr^*_\alpha|} \leq 1 + \frac{4(1-\alpha)}{r_\alpha + 3 r_{1-\alpha}}
  \end{equation*}
 \end{lemma}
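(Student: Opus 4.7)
The plan is to replay the argument of Lemma~\ref{lemma:poss_2} essentially verbatim, applying Lemma~\ref{lemma:neighbourhood_of_large_set} to $\Fscr^*_\alpha$, and then inserting the improved lower bound on $|\Fscr^*_\alpha|$ that comes out of the Four Colour Theorem construction described just above the lemma statement.

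First I would observe that $\Fscr^*_\alpha$ is a set of pairwise vertex-disjoint cycles, because it corresponds to a stable set in the conflict graph $G^\prime$ (together with the isolated vertices $\Lscr_1^{>1-\alpha}$). Lemma~\ref{lemma:neighbourhood_of_large_set} applied with $\Fscr := \Fscr^*_\alpha$ thus produces a multi-set $M \subseteq V(\Fscr^*_\alpha)$ with $|M| \leq |\Fscr^*_\alpha| + |\Lscr_1|$ such that every cycle in $\bigcup_{C^\prime \in \Fscr^*_\alpha} \Nscr_{\Lscr_x}(C^\prime)$ contains a vertex of $M$. Since the cycles in $\Fscr^*_\alpha$ are vertex-disjoint and each is hit by $M$, I can pick distinct representatives $v_C \in V(C) \cap M$ for $C \in \Fscr^*_\alpha$; call this set $M_1$ (so $|M_1| = |\Fscr^*_\alpha|$) and $M_2 := M \setminus M_1$ (so $|M_2| \leq |\Lscr_1|$).

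Each cycle in $\bigcup_{C^\prime \in \Fscr^*_\alpha} \Nscr_{\Lscr_x}(C^\prime) \setminus \Fscr^*_\alpha$ is hit by some vertex of $M$, while each $C \in \Fscr^*_\alpha$ has its LP value accounted for at the corresponding $v_C \in M_1$. As in the proof of Lemma~\ref{lemma:poss_2}, this yields
\begin{align*}
x\Bigl(\bigcup_{C^\prime \in \Fscr^*_\alpha} \Nscr_{\Lscr_x}(C^\prime)\Bigr)
&\leq \sum_{v \in M_1} x(\{C \in \Lscr_x : v \in V(C)\}) + \sum_{v \in M_2} x(\{C \in \Lscr_x \setminus \Fscr^*_\alpha : v \in V(C)\}) \\
&\leq |M_1| + (1 - \alpha)|M_2| \;\leq\; |\Fscr^*_\alpha| + (1-\alpha)|\Lscr_1|,
\end{align*}
where the $(1-\alpha)$ bound on each $M_2$-summand uses that every $v \in M_2 \subseteq V(\Fscr^*_\alpha)$ lies on a cycle of $\Fscr^*_\alpha$ whose $x$-value exceeds $\alpha$.

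To finish, the Four Colour Theorem construction yields $|\Fscr^*_\alpha| \geq |\Lscr_1^{>1-\alpha}| + \tfrac{1}{4}\bigl(|\Lalpha|-|\Lscr_1^{>1-\alpha}|\bigr) = \tfrac{|\Lscr_1|}{4}(r_\alpha + 3 r_{1-\alpha})$, so dividing the above display by $|\Fscr^*_\alpha|$ gives the claimed inequality $1 + \frac{4(1-\alpha)}{r_\alpha + 3 r_{1-\alpha}}$. There is no real obstacle here; the only subtle point is the splitting $M = M_1 \cup M_2$, which is essential — without singling out one representative per cycle in $\Fscr^*_\alpha$, the cruder estimate $x(\bigcup) \leq x(\Fscr^*_\alpha) + (1-\alpha)|M|$ would lose an additive $1-\alpha$ and miss the required bound. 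Everything else is a direct transcription of the proof of Lemma~\ref{lemma:poss_2}, now exploiting the stronger size bound on $\Fscr^*_\alpha$ rather than the pairwise-disjointness that came for free when $\alpha \geq \tfrac{1}{2}$.
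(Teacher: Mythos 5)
Your proof is correct and takes essentially the same approach as the paper: the paper's proof of Lemma~\ref{lemma:poss_3} simply invokes ``a similar argument as in Lemma~\ref{lemma:poss_2}'' and then plugs in the lower bound $|\Fscr^*_\alpha| \geq |\Lscr_1^{>1-\alpha}| + \tfrac14(|\Lalpha|-|\Lscr_1^{>1-\alpha}|)$, and you have spelled out exactly that argument, including the $M_1/M_2$ split that the paper only describes in words (``there are $|\Fscr^*_\alpha|$ vertices in $M$ covering $\Fscr^*_\alpha$, and on the other vertices we only have to count the LP value of cycles not in $\Fscr^*_\alpha$'').
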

 \begin{proof}
  Let $\alpha \geq \frac{1}{2}$.
  By a similar argument as in Lemma~\ref{lemma:poss_2} we get
  \begin{equation*}
   \frac{x\left(\bigcup_{C^\prime \in \Fscr^*_\alpha}\Nscr_{\Lscr_x}(C^\prime)\right)}{|\Fscr^*_\alpha|}
   \leq 1 + \frac{(1-\alpha)|\Lscr_1|}{|\Fscr^*_\alpha|}
  \end{equation*}
  Inserting the bound $|M^*_\alpha| \geq |\Lscr_1^{>1-\alpha}| + \frac{1}{4}(|\Lalpha|- |\Lscr_1^{>1-\alpha}|)$ yields the lemma.
 \end{proof}
 
 One of these possibilities for $\Fscr^*$ will be sufficient to prove the desired upper bound of $\frac{20 + \sqrt{130}}{9}$ for the integrality gap:
 
 \begin{lemma}\label{lemma:one_poss_is_good_enough}
  There exists a set $\Fscr^* \subseteq \Lscr_1$ with
  $\frac{x\left(\bigcup_{C^\prime \in \Fscr^*}\Nscr_{\Lscr_x}(C^\prime)\right)}{|\Fscr^*|} \leq \frac{20 + \sqrt{130}}{9}$
 \end{lemma}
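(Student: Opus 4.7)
My plan is to argue by contradiction. Set $\beta := \frac{20+\sqrt{130}}{9}$ and $\gamma := \beta - 1 = \frac{11+\sqrt{130}}{9}$, so that $\gamma$ is the positive root of $9\gamma^2 - 22\gamma - 1 = 0$. If no choice of $\Fscr^*$ from Lemmas~\ref{lemma:poss_1}--\ref{lemma:poss_3} gives ratio at most $\beta$, then writing $I := x(\Lscr_1)/|\Lscr_1|$ and $r_\alpha := |\Lalpha|/|\Lscr_1|$, the assumption translates into three families of strict inequalities: $I > \gamma - 2$ (Lemma~\ref{lemma:poss_1}); $r_\alpha < (1-\alpha)/\gamma$ for every $\alpha \geq 1/2$ (Lemma~\ref{lemma:poss_2}); and $r_\alpha + 3 r_{1-\alpha} < 4(1-\alpha)/\gamma$ for every $\alpha \in [1/4, 1/2)$ (Lemma~\ref{lemma:poss_3}). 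The aim is to show that the latter two families force $I \leq \gamma - 2$.

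Writing $I = \int_0^1 r_\alpha \, d\alpha$ via the layer-cake formula, the plan is to determine the supremum of $I$ over all non-increasing functions $r_\alpha \colon [0,1] \to [0,1]$ that satisfy the two constraint families. I expect the supremum to be attained by a one-parameter family of extremal configurations indexed by $t := r_{1/2}$: $r_\alpha = 1$ on $[0, a]$ with $a := 1 - \gamma(1+3t)/4$; $r_\alpha = 4(1-\alpha)/\gamma - 3t$ on $[a, 1/2]$ (saturating the third family with $r_{1-\alpha} = t$); $r_\alpha = t$ on $[1/2, 1 - t\gamma]$; and $r_\alpha = (1-\alpha)/\gamma$ on $[1 - t\gamma, 1]$ (saturating the second family). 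A direct integration produces the closed form
\[
 I(t) = 1 - \frac{\gamma}{8} - \frac{1}{2\gamma} + 2t - \frac{3t\gamma}{4} - \frac{13 t^2 \gamma}{8},
\]
a concave quadratic in $t$.

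Optimising gives $t^* = (8 - 3\gamma)/(13\gamma)$ and, after substitution,
\[
 I(t^*) = 1 - \frac{\gamma}{8} - \frac{1}{2\gamma} + \frac{(8-3\gamma)^2}{104\gamma}.
\]
Setting $I(t^*) = \gamma - 2$ and multiplying through by $104\gamma$ reduces to $108\gamma^2 - 264\gamma - 12 = 0$, equivalently $9\gamma^2 - 22\gamma - 1 = 0$, which is exactly the defining equation of $\gamma$. Thus at $\gamma = (11+\sqrt{130})/9$ the maximum feasible $I$ is $\gamma - 2$; since our three inequalities are all strict, the assumption $I > \gamma - 2$ must fail, giving the desired contradiction.

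The main obstacle will be to rigorously justify that the above one-parameter family attains the supremum of $I$. The supporting intuition is that the third constraint couples $r_\alpha$ and $r_{1-\alpha}$ through the rate $3{:}1$, so boosting $r_{1-\alpha}$ by $\delta$ forces $r_\alpha$ down by $3\delta$ under saturation, a net loss of $2\delta$ per unit length in $I$; the only way to increase $I$ is to extend the $r_\alpha = 1$ region by sliding the interface $a$, which is exactly what the parameter $t$ controls. Any deviation that fails to saturate (iii) on $[a, 1/2]$ or sits below the (ii)-envelope on $[1 - t\gamma, 1]$ can be re-saturated without decreasing $I$. One also has to verify that the critical $t^*$ lies in its admissible range $[0, 1/\gamma - 1/3]$, which reduces to the inequalities $\gamma \leq 8/3$ (so $t^* \geq 0$) and $\gamma \leq 15/4$ (so $t^* \leq 1/\gamma - 1/3$), and to check the non-increasing condition at the jump at $\alpha = 1/2$, which amounts to $t^* \leq 1/(2\gamma)$, i.e.\ $\gamma \geq 1/2$; all these bounds hold comfortably at $\gamma \approx 2.489$.
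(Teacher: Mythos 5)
Your answer, setup, and all arithmetic are correct: the three strict inequalities you extract from Lemmas~\ref{lemma:poss_1}--\ref{lemma:poss_3}, the layer-cake identity, the closed form for $I(t)$, the critical $t^*=(8-3\gamma)/(13\gamma)$, and the resulting equation $9\gamma^2-22\gamma-1=0$ all check out, and I verified that your breakpoints coincide with the paper's: $a(t^*)=1/2-\delta^*$ and $1-t^*\gamma=1/2+3\delta^*$ with $\delta^*=(2\gamma-1)/26$. So you have found the same optimization, just from the primal side.

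The one genuine gap is exactly the one you flag: you never prove that your one-parameter family of profiles attains the supremum of $\int_0^1 r_\alpha\,d\alpha$ over all nonincreasing $r\colon[0,1]\to[0,1]$ satisfying the two constraint families. Your perturbation heuristic (the $3{:}1$ coupling) does point in the right direction, but making it rigorous is not immediate: the constraint couples $r_\alpha$ and $r_{1-\alpha}$ across $\alpha=1/2$, the monotonicity constraint enters in a nontrivial way (you are tacitly using that $r$ is constant on $[1/2,\,1-t\gamma]$ precisely because anything else would shift mass the wrong way, but this needs a rearrangement argument), and one must also rule out profiles that fail to saturate (iii) or (ii). The paper avoids all of this by taking the \emph{dual} view: it never asserts extremality of any profile, it just picks the single threshold $\delta$ (your $\delta=\tfrac12-a(t)$), applies the three constraint inequalities pointwise on the three subintervals together with the monotonicity bound $\int_{1/2}^{1/2+3\delta}r\le 3\int_{1/2}^{1/2+\delta}r$, and obtains an explicit upper bound on $\int_0^1 r_\alpha\,d\alpha$ valid for \emph{every} feasible $r$. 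The optimal $\delta$ is then chosen a posteriori. You can close your gap the same way: discard the extremal-profile framing and instead show, for $\delta:=\tfrac12-a(t^*)$, that
\[
\int_0^1 r_\alpha\,d\alpha
\le \int_0^{1/2-\delta}1\,d\alpha
+\int_{1/2-\delta}^{1/2}\bigl(r_\alpha+3r_{1-\alpha}\bigr)\,d\alpha
+\int_{1/2+3\delta}^{1}r_\alpha\,d\alpha
\le \gamma-2,
\]
where the first inequality uses $r_\alpha\le1$ and monotonicity, and the second inserts the two constraint inequalities. This is a direct chain of inequalities, needs no extremality claim, and recovers exactly your $I(t^*)$ on the right-hand side.
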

 \begin{proof}
  Define $\beta := \frac{20 + \sqrt{130}}{9}$.
  We will either pick one of the sets $\Fscr^*_\alpha$ for $\frac{1}{4} \leq \alpha < 1$ from Lemma~\ref{lemma:poss_2} or Lemma~\ref{lemma:poss_3} or we will use $\Fscr^* := \{C^*\}$ with the cycle $C^*$ from Lemma~\ref{lemma:poss_1}.
  Assume none of these sets $\Fscr^*$ fulfills the above inequality.
  Then Lemma~\ref{lemma:poss_2} implies
  \begin{align}
   & 1 + \frac{1 - \alpha}{r_\alpha} > \beta \\
   \Leftrightarrow \quad & r_\alpha < \frac{1 - \alpha}{\beta - 1}
  \end{align}
 for all $\alpha \geq \frac{1}{2}$.
 For $\frac{1}{4} \leq \alpha < \frac{1}{2}$, Lemma~\ref{lemma:poss_3} yields
 \begin{align}
  & 1 + \frac{4(1-\alpha)}{r_\alpha + 3 r_{1-\alpha}} > \beta \\
  \Leftrightarrow \quad & r_\alpha + 3r_{1-\alpha} < \frac{4(1-\alpha)}{\beta - 1}
 \end{align}
 Furthermore, we have
 \begin{equation*}
  x(\Lscr_1) = \sum_{C \in \Lscr_1} \int_0^1 \mathbb{1}_{x(C) > \alpha} d\alpha = \int_0^1 \sum_{C \in \Lscr_1} \mathbb{1}_{x(C) > \alpha} d\alpha = |\Lscr_1| \int_0^1 r_\alpha d\alpha
 \end{equation*}
 Thus, Lemma~\ref{lemma:poss_1} implies
 \begin{equation}
  \beta < 3 + \int_0^1 r_\alpha d\alpha
 \end{equation}
 For any threshold $0 < \delta \leq \frac{1}{6}$ that might be chosen later we can bound this as follows, using the fact that the $r_\alpha$ are non-increasing:
 \begin{align*}
    \beta & < \ 3 + \int_0^1 r_\alpha d\alpha \\
  & = \ 3 + \int_0^{\frac{1}{2} - \delta} r_\alpha d\alpha + \int_{\frac{1}{2} - \delta}^{\frac{1}{2} + 3 \delta} r_\alpha d\alpha + \int_{\frac{1}{2} + 3 \delta}^1 r_\alpha d\alpha \\
  & \leq \ 3 + \int_0^{\frac{1}{2} - \delta} r_\alpha d\alpha + \int_{\frac{1}{2} - \delta}^{\frac{1}{2}} r_\alpha d\alpha + 3 \int_{\frac{1}{2}}^{\frac{1}{2} + \delta} r_\alpha d\alpha + \int_{\frac{1}{2} + 3 \delta}^1 r_\alpha d\alpha \\
  & = \ 3 + \int_0^{\frac{1}{2} - \delta} r_\alpha d\alpha + \int_{\frac{1}{2} - \delta}^{\frac{1}{2}} r_\alpha + 3 r_{1 - \alpha} d\alpha  + \int_{\frac{1}{2} + 3 \delta}^1 r_\alpha d\alpha \\
  & \leq \ 3 + \int_0^{\frac{1}{2} - \delta} 1 d\alpha + \int_{\frac{1}{2} - \delta}^{\frac{1}{2}} \frac{4 (1 - \alpha)}{\beta - 1} d\alpha + \int_{\frac{1}{2} + 3 \delta}^1 \frac{1-\alpha}{\beta - 1} d\alpha \\
  & = \ \frac{7}{2} - \delta + \frac{2 \delta (\delta + 1)}{\beta - 1} + \frac{(1-6 \delta)^2}{8 (\beta - 1)}
 \end{align*}
 This term attains its minimum of
 $\frac{\beta^2 - 94 \beta + 90}{26 (1 - \beta)}$ at $\delta := \frac{2 \beta - 3}{26} < \frac{1}{6}$, so we get
 \begin{align*}
  \beta < \frac{\beta^2 - 94 \beta + 90}{26 (1 - \beta)}
 \end{align*}
 This is a contradiction for $\beta = \frac{20 + \sqrt{130}}{9}$.
 \end{proof}
 
 As an immediate consequence we get our main theorem, similar to Theorem~\ref{thm:ub_4}:
 
 \begin{theorem}\label{thm:best_ub}
  Let $G$ be a planar graph, embedded in the sphere, and $\Cscr$ an uncrossable family of cycles in $G$.
  Then there exists an integral solution to the vertex-disjoint cycle packing LP with at least $\frac{9}{20 + \sqrt{130}}$ the LP value.
  If $\Cscr$ is given by a weight oracle we can compute such a solution in polynomial time.
 \end{theorem}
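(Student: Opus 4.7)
The plan is to run Algorithm~\ref{alg:main_alg}, mirroring the structure of the proof of Theorem~\ref{thm:ub_4}, but replacing the one-cycle choice of $\Fscr^*$ by the more refined choice guaranteed by Lemma~\ref{lemma:one_poss_is_good_enough}. First I would apply Lemma~\ref{lemma:laminar_lp_solution} to obtain an optimum LP solution $x$ with laminar support $\Lscr_x$, and then apply Lemma~\ref{lemma:no_redundant_cycles} to make $x$ structured while preserving its value. Both steps run in polynomial time whenever $\Cscr$ is given by a weight oracle, and the size of $\Lscr_x$ is polynomial in $|V(G)|$.

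In each iteration, I would process the connected components of $E(\Lscr_x)$ independently, since cycles in distinct components are vertex-disjoint and their LP constraints decouple. On a single component, Lemma~\ref{lemma:one_poss_is_good_enough} furnishes a non-empty set $\Fscr^* \subseteq \Lscr_1$ of pairwise vertex-disjoint one-sided cycles such that
\[
 x\Bigl(\bigcup_{C \in \Fscr^*} \Nscr_{\Lscr_x}(C)\Bigr) \leq \beta \, |\Fscr^*|, \qquad \beta := \frac{20+\sqrt{130}}{9}.
\]
Adding $\Fscr^*$ to $\Lscr^*$ and zeroing $x$ on $\bigcup_{C \in \Fscr^*} \Nscr_{\Lscr_x}(C)$ thus destroys at most $\beta\,|\Fscr^*|$ units of LP value while contributing $|\Fscr^*|$ cycles to the integral solution; the remaining $x$ is still feasible with laminar support, and re-applying Lemma~\ref{lemma:no_redundant_cycles} restores structuredness for the next iteration.

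Summing the per-iteration inequality over all iterations (and over all connected components) gives $\mathrm{OPT}_{\text{LP}} \leq \beta \cdot |\Lscr^*|$, i.e.\@ $|\Lscr^*| \geq \frac{9}{20+\sqrt{130}} \cdot \mathrm{OPT}_{\text{LP}}$, which is the claimed approximation guarantee. Vertex-disjointness of the final $\Lscr^*$ follows inductively: each newly added $\Fscr^*$ is vertex-disjoint from everything previously chosen because after each iteration we zero $x$ on all neighbours of $\Fscr^*$, so subsequently chosen cycles lie in $\Lscr_x$ and hence share no vertices with $\bigcup \Fscr^*$.

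There is no genuine obstacle here beyond what the preceding lemmas already overcome; the only delicate point is polynomial runtime. Lemma~\ref{lemma:laminar_lp_solution} and Lemma~\ref{lemma:no_redundant_cycles} are polynomial; the three candidate sets $\Fscr^*$ of Lemmas~\ref{lemma:poss_1}--\ref{lemma:poss_3} can each be enumerated in polynomial time (the Four Colour Theorem, as used in Lemma~\ref{lemma:poss_3}, admits a polynomial-time algorithm), and we simply pick the best among them together with the singleton candidate from Lemma~\ref{lemma:poss_1}; Lemma~\ref{lemma:one_poss_is_good_enough} guarantees that at least one satisfies the ratio $\beta$. Since each iteration strictly shrinks $\Lscr_x$ and $|\Lscr_x|$ is polynomial in $|V(G)|$, the algorithm terminates in polynomial time.
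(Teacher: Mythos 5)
Your proposal is correct and follows essentially the same route as the paper's proof: run Algorithm~\ref{alg:main_alg}, use the set $\Fscr^*$ guaranteed by Lemma~\ref{lemma:one_poss_is_good_enough} in each iteration instead of a single cycle, and argue polynomial runtime by enumerating the singleton candidate, the linearly many thresholds $\Lalpha$, and a polynomial-time four-colouring. The extra detail you provide (re-applying Lemma~\ref{lemma:no_redundant_cycles} after each iteration, processing components independently, the telescoping sum, and the vertex-disjointness of the final $\Lscr^*$) is all consistent with what the paper leaves implicit via the earlier proof of Theorem~\ref{thm:ub_4}.
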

 \begin{proof}
  As in Theorem~\ref{thm:ub_4} we apply Algorithm~\ref{alg:main_alg}.
  In contrast to the procedure in Theorem~\ref{thm:ub_4} however we use a set $\Fscr^*$ as guaranteed by Lemma~\ref{lemma:one_poss_is_good_enough} in step~\ref{step:choose_cycle_set} of the algorithm instead of a set consisting of just one one-sided cycle.
  Thus, in each step we increase the number of cycles with LP value 1 by $|\Fscr^*|$ while decreasing the LP value on $\Lscr_x$ by at most $\frac{20 + \sqrt{130}}{9} |\Fscr^*|$.
  Therefore we arrive at an integral solution to the LP with at least $\frac{9}{20 + \sqrt{130}}$ the LP value.
  
  Note that a set $\Fscr^*$ as in Lemma~\ref{lemma:one_poss_is_good_enough} can be found in polynomial time in $|\Lscr_x|$:
  For the cycle guaranteed by Lemma~\ref{lemma:poss_1} we can try all one-sided cycles.
  For the sets $\Fscr^*_\alpha$ used in Lemma~\ref{lemma:poss_2}, note that there are only linearly many different sets $\Lalpha$ to consider.
  The sets $\Fscr^*_\alpha$ used in Lemma~\ref{lemma:poss_3} are constructed from $\Lalpha$ by applying the Four Colour Theorem, which can also be done in polynomial time \cite{RobSST96}.
 \end{proof}
 
 \subsection{The edge-disjoint version}\label{sec:edge_disjoint}
 
 This section is dedicated to proving an edge-disjoint version of Theorem~\ref{thm:best_ub}.
 One possibility to do this is to give edge-disjoint versions of Algorithm~\ref{alg:main_alg}, the Structure Lemma~\ref{lemma:structure_lemma_using_incidences} as well as Lemma~\ref{lemma:neighbourhood_of_large_set} and then Lemma~\ref{lemma:one_poss_is_good_enough}.
 All of this is possible analogous to the vertex-disjoint versions, however we can also use a simple reduction by Schlomberg, Thiele and Vygen~\cite{SchTV22}.
 This does not generally reduce edge-disjoint cycle packing to vertex-disjoint cycle packing in general, but it does so for cycle packing in laminar cycle families:
 
 \begin{lemma}\label{lemma:reduction_edge_to_vertex}
  Given a planar graph $G$, embedded in the sphere, and a laminar family $\Lscr$ of cycles in $G$, we can compute in polynomial time a planar graph $G^\prime$ and a laminar family $\Lscr^\prime$ of cycles in $G^\prime$, together with a bijection $f \colon \Lscr \to \Lscr^\prime$ such that for any $C_1, C_2 \in \Lscr$ we have that $E(C_1) \cap E(C_2) = \emptyset$ if and only if $V(f(C_1)) \cap V(f(C_2)) = \emptyset$.
 \end{lemma}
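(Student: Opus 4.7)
The plan is to perform a local vertex-splitting operation at each vertex of $G$ that unbundles the pairs of edges used by the cycles of $\Lscr$. First I would fix the given planar embedding of $G$ and, at each $v \in V(G)$, record the cyclic order $e^v_1, \dots, e^v_{d_v}$ of the edges at $v$. Each cycle $C \in \Lscr$ passing through $v$ uses exactly two of these edges, thereby contributing a chord $\{i, j\}$ to a collection $M_v$ of pairs on $\{1, \dots, d_v\}$. Laminarity of $\Lscr$ forces the chords of $M_v$ to be pairwise non-crossing on the cyclic arrangement (though they may share endpoints when two cycles share an edge at $v$), since two cycles with interleaving edge pairs at $v$ could not admit disjoint sides. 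I would then construct $G^\prime$ by introducing new vertices $v_1, \dots, v_{d_v}$ at each $v$, replacing every edge $e \in E(G)$ that appears as the $i$-th edge at its endpoint $u$ and the $j$-th edge at its endpoint $v$ by the edge $\{u_i, v_j\}$, and adding a \emph{gadget edge} $\{v_i, v_j\}$ for every pair $\{i, j\} \in M_v$. The embedding of $G^\prime$ extends naturally: place the $v_i$ on the boundary of a small disk around $v$ in cyclic order and draw the gadget edges inside this disk, which is possible precisely because the chords in $M_v$ are non-crossing.

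Next, I would define the bijection $f \colon \Lscr \to \Lscr^\prime$ by sending $C$ to the cycle $f(C)$ in $G^\prime$ that traverses the $G^\prime$-copy of every edge of $C$ and, at each $v \in V(C)$ whose incident edges on $C$ are $e^v_i$ and $e^v_j$, additionally traverses the gadget edge $\{v_i, v_j\}$. For the key equivalence, if $C_1, C_2$ share an edge $e \in E(G)$ joining its endpoints as their $i$-th and $j$-th edges, then both $f(C_1)$ and $f(C_2)$ use the $G^\prime$-copy $\{u_i, v_j\}$ of $e$ and hence share both of its endpoints. For the converse, I would observe that if $f(C_1)$ and $f(C_2)$ both contain some $v_i$, the only edge of $G^\prime$ incident to $v_i$ that leaves the gadget at $v$ is the copy of $e^v_i$; hence both $f(C_1)$ and $f(C_2)$ use that copy, which forces $e^v_i \in E(C_1) \cap E(C_2)$.

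Finally, I would verify that $\Lscr^\prime := \{f(C) : C \in \Lscr\}$ is laminar. Each $f(C)$ is homotopic on $\mathbb{S}^2$ to $C$ because all modifications take place inside arbitrarily small disks around vertices of $G$, so the two sides of $f(C)$ in the embedding of $G^\prime$ correspond canonically to the two sides of $C$ in the embedding of $G$. Consequently, disjoint sides in $\Lscr$ transfer to disjoint sides in $\Lscr^\prime$, and the entire construction is clearly computable in polynomial time from the embedding of $G$ and the family $\Lscr$. The step that I expect to require the most care is establishing that the chord collection $M_v$ is non-crossing in the sense required to draw the gadget edges planarly; this is essentially an unpacking of the disjoint-sides definition of laminarity localised at a single vertex, and it is the crucial ingredient both for planarity of $G^\prime$ and for the preservation of laminarity in $\Lscr^\prime$.
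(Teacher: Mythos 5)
Your construction is correct, but it is not the one the paper uses. The paper defines $G'$ directly on the edge set of $G$: it sets $V(G') := E(G)$ and, for every path $P = e_1 e_2$ of length two inside a cycle $C \in \Lscr$, adds a parallel edge $e_C^P = \{e_1, e_2\}$ to $G'$; the image $f(C)$ is then the cycle through the edges $e_C^P$, and two cycles of $\Lscr$ share an edge in $G$ exactly when their images share a vertex of $G'$. Your version instead splits each vertex $v$ into darts $v_1, \dots, v_{d_v}$, replaces each $e \in E(G)$ by a ``split'' edge between the corresponding darts, and connects the two darts that a cycle uses at $v$ by a gadget edge inside a small disk. The two constructions are closely related — contracting all of your split edges recovers (a multigraph variant of) the paper's $G'$ — but the paper's is more economical, has half as many vertices, and admits an essentially one-line definition of $f$. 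Both arguments rest on the same local observation that laminarity forces the chord family $M_v$ of edge pairs used by cycles at $v$ to be non-crossing in the rotation at $v$, which is what lets the new edges be drawn inside the vertex disks while keeping $G'$ planar and $\Lscr'$ laminar; you correctly identify this as the crux, and in fact you spell it out somewhat more explicitly than the paper, which delegates this to a figure. One small presentational gap in your write-up: in the converse direction you argue that any $f(C)$ visiting $v_i$ must use the copy of $e^v_i$ ``because that is the only edge leaving the gadget,'' which tacitly uses that $f(C)$ traverses exactly one gadget edge per vertex of $C$; that fact is built into your definition of $f(C)$, so the argument is sound, but it is worth stating.
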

 \begin{proof}
  Define $V(G^\prime) := E(G)$.
  For any path $P=e_1 e_2$ of length two in a cycle $C \in \Lscr$ we add the edge $e_C^P := \{e_1, e_2\}$ to $E(G^\prime)$.
  For any $C \in \Lscr$ let $f(C)$ be the cycle consisting of all edges $e_C^P \in E(G^\prime)$ for any path $P$ of length two inside $C$.
  
  Since $\Lscr$ is laminar, $G^\prime$ can be embedded planarly such that $\Lscr^\prime := \{f(C) : C \in \Lscr\}$ defines a laminar family of cycles, as shown in Figure~\ref{fig:reduction_edge_to_vertex}.
  By definition, all cycles in $\Lscr^\prime$ are edge-disjoint and two cycles $C_1, C_2 \in \Lscr$ share an edge in $G$ if and only if $f(C_1)$ and $f(C_2)$ share a vertex in $G^\prime$.
 \end{proof}
 
 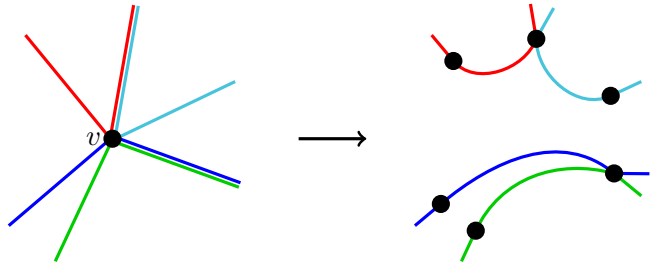
\begin{figure}[htb]
 \begin{center}
  \begin{tikzpicture}[scale=0.45, very thick]
   \draw[color=red] (130:4) -- (170:0.07) -- (81:4);
   \draw[color=SkyBlue] (79:4) -- (-10:0.07) -- (25:4);
   \draw[color=blue] (-140:4) -- (70:0.07) -- (-19:4);
   \draw[color=darkgreen] (-115:4) -- (-110:0.07) -- (-21:4);
   \node[draw, fill, circle, inner sep=2pt, color=black] at (0,0) {};
   \node[anchor=east] at (0,0) {$v$};
   \draw[->] (5.5,0) -- (7.5,0);
   \begin{scope}[shift={(12,0)}]
    \draw[color=red] (130:4) -- (130:3) to[out=-50,in=-110] (80:3) -- (85:4);
    \draw[color=SkyBlue] (75:4) -- (80:3) to[out=-90,in=-155] (25:3) -- (25:4);
    \draw[color=blue] (-140:4) -- (-140:3) to[out=40,in=140] (-20:3) -- (-15:4);
    \draw[color=darkgreen] (-115:4) -- (-115:3) to[out=65,in=165] (-20:3) -- (-25:4);
    \node[draw, fill, circle, inner sep=2pt, color=black] at (130:3) {};
    \node[draw, fill, circle, inner sep=2pt, color=black] at (80:3) {};
    \node[draw, fill, circle, inner sep=2pt, color=black] at (25:3) {};
    \node[draw, fill, circle, inner sep=2pt, color=black] at (-140:3) {};
    \node[draw, fill, circle, inner sep=2pt, color=black] at (-115:3) {};
    \node[draw, fill, circle, inner sep=2pt, color=black] at (-20:3) {};
   \end{scope}
  \end{tikzpicture}
 \end{center}
 \caption{Example for the construction of $G^\prime$ and $\Lscr^\prime$:
 The left picture shows four cycles in $\Lscr$ containing the vertex $v$.
 The six edges incident to $v$ correspond to vertices of $G^\prime$, as shown in the right picture.
 Since $\Lscr$ is laminar, the paths of length two in cycles of $\Lscr$ can be embedded planarly as edges of $G^\prime$. Cycles in $\Lscr$ share an edge if and only if the corresponding cycles in $\Lscr^\prime$ share a vertex.
 \label{fig:reduction_edge_to_vertex}}
\end{figure}
 
 Using this reduction, we can easily extend Theorem~\ref{thm:best_ub} to the edge-disjoint case:
 
 \begin{theorem}\label{thm:best_ub_edge_disjoint}
  Let $G$ be a planar graph, embedded in the sphere, and $\Cscr$ an uncrossable family of cycles in $G$.
  Then there exists an integral solution to the edge-disjoint cycle packing LP (\ref{eq:lp_edgedisjoint}) a with at least $\frac{9}{20 + \sqrt{130}}$ the LP value.
  If $\Cscr$ is given by a weight oracle we can compute such a solution in polynomial time.
 \end{theorem}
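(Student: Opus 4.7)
The plan is to reduce the edge-disjoint case to the vertex-disjoint case already handled by Theorem~\ref{thm:best_ub}, using the reduction supplied by Lemma~\ref{lemma:reduction_edge_to_vertex}.

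First, I would apply Lemma~\ref{lemma:laminar_lp_solution} to the edge-disjoint LP~\eqref{eq:lp_edgedisjoint} on $(G,\Cscr)$ to obtain an optimum solution $x$ whose support $\Lscr\subseteq\Cscr$ is laminar; if $\Cscr$ has a weight oracle, this takes polynomial time. Next, I would invoke Lemma~\ref{lemma:reduction_edge_to_vertex} on $(G,\Lscr)$ to produce a planar graph $G'$, a laminar family $\Lscr'$ in $G'$, and a bijection $f\colon\Lscr\to\Lscr'$ with the crucial property that edge-disjointness of cycles in $\Lscr$ corresponds exactly to vertex-disjointness of cycles in $\Lscr'$.

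The key observation is that, by the construction of $f$ in the proof of Lemma~\ref{lemma:reduction_edge_to_vertex}, the vertex set of $f(C)\subseteq G'$ is exactly $E(C)$ viewed as a subset of $V(G')=E(G)$. Hence the vector $x'\in\mathbb{R}^{\Lscr'}$ defined by $x'_{f(C)}:=x_C$ is a feasible solution to the vertex-disjoint LP~\eqref{eq:lp} on $G'$ with cycle family $\Lscr'$ and has the same objective value as $x$. Since $\Lscr'$ is already laminar, step~\ref{step:compute_lp_solution} of Algorithm~\ref{alg:main_alg} is trivial on this instance (no weight oracle or uncrossing argument is needed, as explicitly noted after Algorithm~\ref{alg:main_alg}). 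Applying the analysis behind Theorem~\ref{thm:best_ub} (i.e., Algorithm~\ref{alg:main_alg} run on $(G',\Lscr')$ starting from $x'$, with the choice of $\Fscr^*$ guaranteed by Lemma~\ref{lemma:one_poss_is_good_enough} in each iteration) yields in polynomial time a set $\Lscr^{*\prime}\subseteq\Lscr'$ of pairwise vertex-disjoint cycles with $|\Lscr^{*\prime}|\geq\tfrac{9}{20+\sqrt{130}}\sum_{C'\in\Lscr'} x'_{C'}$.

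Finally, I would pull this back: the preimage $f^{-1}(\Lscr^{*\prime})\subseteq\Lscr\subseteq\Cscr$ is a set of pairwise edge-disjoint cycles in $G$, by the defining property of the bijection $f$, and it has size at least $\tfrac{9}{20+\sqrt{130}}$ times the optimum value of~\eqref{eq:lp_edgedisjoint}. Everything runs in polynomial time provided $\Cscr$ has a weight oracle. There is no real obstacle here: the main conceptual work is already packaged in Theorem~\ref{thm:best_ub} and Lemma~\ref{lemma:reduction_edge_to_vertex}; the only point that needs a line of verification is that $x'$ remains feasible for the vertex-disjoint LP on $G'$, which is immediate because for every $v\in V(G')=E(G)$ the constraint $\sum_{C'\in\Lscr':v\in C'} x'_{C'}\le 1$ is literally the edge-constraint of $x$ for the edge $v$ of $G$.
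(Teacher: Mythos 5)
Your proposal is correct and follows the same route as the paper: apply Lemma~\ref{lemma:laminar_lp_solution} to obtain a laminar optimal solution, transfer it via the reduction of Lemma~\ref{lemma:reduction_edge_to_vertex} to the vertex-disjoint setting, invoke Theorem~\ref{thm:best_ub} (equivalently Algorithm~\ref{alg:main_alg} with Lemma~\ref{lemma:one_poss_is_good_enough}) there, and pull the integral solution back. The extra sentence checking feasibility of the transferred LP solution is a welcome explicit verification of something the paper states in one line.
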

 \begin{proof}
  We first apply Lemma~\ref{lemma:laminar_lp_solution} to get an optimum LP solution $x$ to (\ref{eq:lp_edgedisjoint}) with laminar support $\Lscr$.
  We then apply Lemma~\ref{lemma:reduction_edge_to_vertex} to get a laminar set $\Lscr^\prime$ of cycles in a planar graph $G^\prime$ with a bijection $f \colon \Lscr \to \Lscr^\prime$ such that edge-disjointness in $\Lscr$ translates to vertex-disjointness in $\Lscr^\prime$.
  
  Note that $y \in \mathbb{R}^{\Lscr^\prime}$ with $y_{f(C)} = x_C$ for all $C \in \Lscr$ defines a feasible solution to the LP (\ref{eq:lp}) on $\Lscr^\prime$.
  Similar to Theorem~\ref{thm:best_ub} we can find an integral solution $\bar{y} \in \mathbb{R}^{\Lscr^\prime}$ to (\ref{eq:lp}) on $\Lscr^\prime$ with $\bar{y}(\Lscr^\prime) \geq \frac{9}{20 + \sqrt{130}} y(\Lscr^\prime)$.
  Setting $\bar{x}_C := \bar{y}_{f(C)}$ for all $C \in \Lscr$ then yields an integral solution to (\ref{eq:lp_edgedisjoint}) on $\Lscr$ with $\bar{x}(\Lscr) = \bar{y}(\Lscr^\prime) \geq \frac{9}{20 + \sqrt{130}} y(\Lscr^\prime) = \frac{9}{20 + \sqrt{130}} x(\Lscr)$.
 \end{proof}
 
 \section{Acknowledgement}
 Thanks to Luise Puhlmann and Jens Vygen for listening, reading and improvement ideas.
 
 \bibliographystyle{plain}
\bibliography{bibliography} 

\begin{thebibliography}{10}

\bibitem{AppH76}
K.~Appel and W.~Haken.
\newblock A proof of the four color theorem.
\newblock {\em Discrete Mathematics}, 16:179--180, 1976.

\bibitem{BerY12}
Piotr Berman and Grigory Yaroslavtsev.
\newblock Primal-dual approximation algorithms for node-weighted network design
  in planar graphs.
\newblock In {\em Approximation, Randomization, and Combinatorial Optimization
  (Proceedings of APPROX 2012)}, pages 50--60, 2012.

\bibitem{CamEM17}
Wouter Cames~van Batenburg, Louis Esperet, and Tobias M\"uller.
\newblock Coloring {J}ordan regions and curves.
\newblock {\em SIAM Journal on Discrete Mathematics}, 31(3):1670--1684, 2017.

\bibitem{CheFS12}
Hong-Bin Chen, Hung-Lin Fu, and Chih-Huai Shih.
\newblock Feedback vertex set on planar graphs.
\newblock {\em Taiwanese Journal of Mathematics}, 16:2077--2082, 2012.

\bibitem{ChiNAO85}
Norishige Chiba, Takao Nishizeki, Shigenobu Abe, and Takao Ozawa.
\newblock A linear algorithm for embedding planar graphs using pq-trees.
\newblock {\em Journal of Computer and System Sciences}, 30(1):54--76, 1985.

\bibitem{ErdP65}
Paul Erd\H{o}s and Lajos P{\'o}sa.
\newblock On independent circuits contained in a graph.
\newblock {\em Canadian Journal of Mathematics}, 17:347--352, 1965.

\bibitem{FioHRV07}
Samuel Fiorini, Nadia Hardy, Bruce Reed, and Adrian Vetta.
\newblock Approximate min--max relations for odd cycles in planar graphs.
\newblock {\em Mathematical Programming}, 110(1):71--91, 2007.

\bibitem{GarK20}
Naveen Garg and Nikhil Kumar.
\newblock Dual half-integrality for uncrossable cut cover and its application
  to maximum half-integral flow.
\newblock In {\em 28th Annual European Symposium on Algorithms (ESA 2020)},
  pages 55:1--55:13, 2020.

\bibitem{GarKS22}
Naveen Garg, Nikhil Kumar, and Andr{\'a}s Seb{\H{o}}.
\newblock Integer plane multiflow maximisation: one-quarter-approximation and
  gaps.
\newblock {\em Mathematical Programming}, 195:403--419, 2022.

\bibitem{GoeW98}
Michel~X. Goemans and David~P. Williamson.
\newblock Primal-dual approximation algorithms for feedback problems in planar
  graphs.
\newblock {\em Combinatorica}, 18(1):37--59, 1998.

\bibitem{GokKMS22}
Alexander G\"{o}ke, Jochen Koenemann, Matthias Mnich, and Hao Sun.
\newblock Hitting weighted even cycles in planar graphs.
\newblock {\em SIAM Journal on Discrete Mathematics}, 36(4):2830--2862, 2022.

\bibitem{HuaMMSV21}
Chien-Chung Huang, Mathieu Mari, Claire Mathieu, Kevin Schewior, and Jens
  Vygen.
\newblock An approximation algorithm for fully planar edge-disjoint paths.
\newblock {\em SIAM Journal on Discrete Mathematics}, 35:752--769, 2021.

\bibitem{KraSS12}
Daniel Kr\'al\textquoteright{}, Jean-Sebastien Sereni, and Ladislav Stacho.
\newblock Min-max relations for odd cycles in planar graphs.
\newblock {\em SIAM Journal on Discrete Mathematics}, 26(3):884--895, 2012.

\bibitem{KraV04}
Daniel Kr\'al\textquoteright{} and Heinz-J{\"u}rgen Voss.
\newblock Edge-disjoint odd cycles in planar graphs.
\newblock {\em Journal of Combinatorial Theory, Series B}, 90(1):107--120,
  2004.

\bibitem{LucY78}
Claudio~L. Lucchesi and Daniel~H. Younger.
\newblock A minimax theorem for directed graphs.
\newblock {\em Journal of the London Mathematical Society II}, 17(3):369--374,
  1978.

\bibitem{MaYZ13}
Jie Ma, Xingxing Yu, and Wenan Zang.
\newblock Approximate min-max relations on plane graphs.
\newblock {\em Journal of Combinatorial Optimization}, 26(1):127--134, 2013.

\bibitem{MidP93}
Matthias Middendorf and Frank Pfeiffer.
\newblock On the complexity of the disjoint paths problem.
\newblock {\em Combinatorica}, 13(1):97--107, 1993.

\bibitem{RauR01}
Dieter Rautenbach and Bruce Reed.
\newblock The {Erd\H{o}s-P{\'o}sa} property for odd cycles in highly connected
  graphs.
\newblock {\em Combinatorica}, 21(2):267--278, 2001.

\bibitem{RauR09}
Dieter Rautenbach and Friedrich Regen.
\newblock On packing shortest cycles in graphs.
\newblock {\em Information Processing Letters}, 109(14):816--821, 2009.

\bibitem{Ree99}
Bruce Reed.
\newblock Mangoes and blueberries.
\newblock {\em Combinatorica}, 19(2):267--296, 1999.

\bibitem{ReeRST96}
Bruce Reed, Neil Robertson, Paul Seymour, and Robin Thomas.
\newblock Packing directed circuits.
\newblock {\em Combinatorica}, 16(4):535--554, 1996.

\bibitem{ReeS96}
Bruce~A. Reed and F.~Bruce Shepherd.
\newblock The {Gallai--Younger} conjecture for planar graphs.
\newblock {\em Combinatorica}, 16(4):555--566, 1996.

\bibitem{RobSST96}
Neil Robertson, Daniel~P. Sanders, Paul Seymour, and Robin Thomas.
\newblock Efficiently four-coloring planar graphs.
\newblock In {\em Proceedings of the Twenty-Eighth Annual ACM Symposium on
  Theory of Computing}, STOC '96, page 571–575, New York, NY, USA, 1996.
  Association for Computing Machinery.

\bibitem{SchTV22}
Niklas Schlomberg, Hanjo Thiele, and Jens Vygen.
\newblock Packing cycles in planar and bounded-genus graphs.
\newblock In {\em ACM-SIAM Symposium on Discrete Algorithms (SODA 2023)}, pages
  2069--2086, 2023.

\end{thebibliography}
 
 \end{document}